\numberwithin{equation}{section}
\newcommand{\Bc}{{\mathcal B}}
\newcommand{\Dc}{{\mathcal D}}\newcommand{\Ec}{{\mathcal E}}
\newcommand{\As}{{\mathscr A}}
\newcommand{\Bs}{{\mathscr B}}\newcommand{\Cs}{{\mathscr C}}
\newcommand{\Ms}{{\mathscr M}}
\newcommand{\Hs}{{\mathscr H}}\newcommand{\Ks}{{\mathscr K}}
\newcommand{\Mg}{{\mathfrak M}}
\newcommand{\R}{{\mathbb R}}
\newcommand{\bgam}{\boldsymbol{\gamma}}
\newcommand{\bM}{\boldsymbol{M}}
\begin{document}
\newtheorem{defn}{\sc Definition}[section]
\newtheorem{teo}{\sc Theorem}[section]
\newtheorem{lem}[teo]{\sc Lemma}
\newtheorem{prop}[teo]{\sc Proposition}
\newtheorem{cor}[teo]{\sc Corollary}
\theoremstyle{remark}
\newtheorem{ex}{\bf Example}[section]
\newtheorem{rem}{\bf Remark}[section]

\title[Stochastic Mappings and Random Distribution Fields]{Stochastic Mappings and Random Distribution Fields. A Correlation Approach}

\author{P\u{a}storel Ga\c{s}par}
\address{Department of Mathematics and Computer Science,
Faculty of Exact Sciences,
``Aurel Vlaicu'' University, Arad,
Str. E. Dr\u agoi nr. 2,
310330  Arad,
Romania }
\email{pastorel.gaspar@uav.ro}

\author{Lorena Popa}
\email{popa.lorena@yahoo.com}

\date{\today}

\begin{abstract}
This paper contains a study of multivariate second order stochastic mappings indexed by an abstract set $\Lambda$ in close connection to their operator covariance functions. \\
The characterizations of the normal Hilbert module or of Hilbert spaces associated to such a multivariate second order stochastic mapping in terms of reproducing kernel structures are given, aiming not only to gather into a unified way some concepts from the field, but also to indicate an instrument for extending the very well elaborated theory of multivariate second order stochastic processes (or random fields) to the case of multivariate second order random distribution fields, including multivariate second order stochastic measures.  In particular a general Wold type decomposition is extended and discussed in our framework.
\end{abstract}
\subjclass{60G20 ; 47B32}
\keywords{stochastic mapping, correlation function, positive definite kernels, reproducing kernel Hilbert modules, Kolmogorov factorization, stochastic measures, random distribution fields}
\maketitle

\section{Introduction}

The study of stochastic or random processes (i.e. a family of random variables) is nowadays among the important topics in mathematical research.
The family of random variables composing a stochastic process was, naturally, first indexed by the set of integers ${\mathbb Z}$ and then by the set of real numbers ${\mathbb R}$. Further the necessity of simultaneous study of more stochastic processes or to describe more complicated phenomena from technical, natural or social sciences,  led to consider the process as consisting of multivariate (finite or infinite dimensional) random variables or even generalized random variables (i.e. continuous linear operators sending the complex random variables into some Hilbert space) and, on the other side, to index the process by ${\mathbb Z}^d$ and ${\mathbb R}^d$ (with $d$ a positive integer) or, even by an abstract semigroup or other algebraic-topological structures.

In an attempt to gather the existing frameworks for studying random fields we introduce \emph{multivariate stochastic mappings} as families of $H$-valued random variables indexed by an arbitrary set $\Lambda$ (often taken with a topological structure), with $H$ an infinite dimensional separable complex Hilbert space.


The plan of the paper runs as follows. In Section 2 the basic concepts regarding the organization of the set of second order $H$-valued random variables into a normal Hilbert $\Bc(H)$-module and some other basic notations are introduced, the outline of the chosen framework and it's connections to the developments from the specialized literature being mentioned. \\
Section 3 presents basic results regarding the correlation theory (including a Kolmogorov-type factorization), formulated in the general setting of multivariate second order stochastic mappings, where reproducing kernel techniques are essentially used. \\
In Section 4 multivariate second order random distribution fields are defined and using the space $\Dc_d$ of test functions on $\R^d$ from distribution theory as index set, the results from Section 3 are applied to obtain a correlation theory for such type of stochastic mappings. A general Wold decomposition in this extended framework ends this Section. \\
Since, in further developments of the theory of multivariate random distribution fields their subclass of regular multivariate stochastic not necessarily bounded measures on $\R^d$ play an important role, Section 5 is devoted to transpose  the results of Sections 3 and 4 to such measures as stochastic mappings indexed by some $\delta$-rings of Borel sets in $\R^d$. \\
Finally in Section 6 considerations about next steps in developing the theory in this framework are made.



\section{Preliminaries and basic notations}

We start with a probability space $(\Omega, \As, \wp)$ and denote by $L^{2}(\wp)$ the Hilbert space of second order complex valued random variables and by $L^{2}_s(\wp, H)$ the Hilbert space of strong second order $H$~-~valued random variables. More generally, a function $f:\Omega\rightarrow H$ is from $L^{r}_s(\wp,H),\ 1\le r < \infty$, if it is strongly measurable (in this case equivalent to weakly measurable) and the $r$th power of its norm is integrable, the spaces being endowed with the usual $r$-norms. Hence, the scalar product in $L^{2}_s(\wp,H)$ takes the natural form:

\begin{equation}\label{111}
(f,g)_{L_s^{2}(\wp,H)}:=\int\limits_{\Omega}\left(f(\omega), g(\omega)\right)_{H}d\wp(\omega) \ ;\quad f,g \in L^{2}_s(\wp,H)
\end{equation}
(see also \cite{Kaki} pp.11).
The space $L^2_w(\wp, H)$ of weak second order $H$-valued random variables being in an obvious way defined, it is not hard to see that $L^2_s(\wp, H) \subset L^2_w (\wp, H) \subset L^1_s (\wp, H)$. Then we can identify in $L^{2}_s(\wp,H)$ and also in $L^2_w(\wp, H)$ the subspaces of strong, respectively weak second order $H$-valued random variables of zero mean \footnote{Since the random variables composing a stochastic process or random field are supposed to have the same mean, we might as well consider only the ones of zero mean.} $L^2_{s, 0} (\wp, H)$ and $L^2_{w,0}(\wp, H)$ respectively.
Now, the mapping
\begin{equation}\label{eq:coresp.weak.rnd.var}
f\mapsto V_f
\end{equation}
where
\begin{equation} \label{eq:V_f}
V_f \chi = \int_\Omega \chi (\omega) f(\omega)  d \wp (\omega) ,\ \ \ \chi \in L^2_0(\wp),
\end{equation}
enables us to regard the elements of these two spaces as Hilbert space morphisms between $L^2_0(\wp)$ and $H$. This is done by endowing them with a structure of inner product $\Bc(H)$-modules. Therefore we establish the following notations. When $G$ is another complex Hilbert space, then $\Bc(G, H)$ means the space of continuous linear operators, while $\Cs_2(G,H)$ is the class of Hilbert-Schmidt operators from $G$ to $H$. $\Cs_1(H)$ is the ideal of trace class operators from the $C^*$-algebra $\Bc(H) = \Bc(H, H)$.
Now, $L^2_{s,0}[\wp, H]$ denotes $L^2_{s, 0} (\wp, H)$ organized as a normal Hilbert $\Bc(H)$-module with the natural outer action of $\Bc(H)$ and the $\Cs_1(H)$-valued inner product (Gramian) defined \footnote{here the tensor product is understood in the sense of Schatten, i.e. $(h\otimes \bar k)h':\ = (h',k)_H h;\ h,h',k \in H$} by
\begin{equation}\label{eq:gramian.int}
[f,g]_{L^2_{s,0}[\wp, H]}:\ = \int f(\omega) \otimes \overline{g(\omega)} d\wp(\omega), \ f, g \in L^2_{s,0}[\wp, H],
\end{equation}
and the topology given by the norm
\begin{equation}\label{eq:norm.mod.Hilb}
\| f\|_{L^2_{s,0}[\wp, H]} = \| [f,f]\|^{1/2}, \qquad f \in L^2_{s,0}[\wp, H].
\end{equation}
Let's note that because of the connection $(f,g)_{L^2_{s,0}(\wp, H)} = {\mathrm {tr}}[f,g]_{L^2_{s,0}[\wp, H]}$  between the scalar product \eqref{111} and the Gramian \eqref{eq:gramian.int},
the norm \eqref{eq:norm.mod.Hilb}, defined above, actually coincides with the natural $L^2$-norm.

Thus, the mapping \eqref{eq:coresp.weak.rnd.var} is a (Gramian preserving) module isomorphism of $L^2_{s,0}[\wp , H]$ onto the normal Hilbert $\Bc(H)$-module $\Cs_2(L^2_0(\wp), H)$, while its extension to $L^2_{w,0}(\wp, H)$ embeds it in a natural way into the Hilbert $\Bc(H)$-module $\Bc(L^2_0(\wp), H)$.

The elements of $\Hs$, which as in \cite{Kaki} represents henceforth a common notation for $L^2_{s,0}[\wp,H]$ and $\Cs_2(L^2_0(\wp), H)$, respectively the elements of
$\Bc(L^2_{0}(\wp),H)$ (including those from $L^2_{w,0}(\wp, H)$) will be called \emph{multivariate strong second order random variables of zero mean}, respectively \emph{generalized multivariate second order random variables of zero mean}.
In what follows instead of \emph{strong second order} we use simply the term \emph{second order}.

Generally speaking a (generalized) multivariate stochastic process is a family $\{\Phi_{\lambda}\}_{\lambda\in\Lambda}$ of (generalized) multivariate random variables, where if $\Lambda = \mathbb{Z}^{d}$, we have a \emph{d-time discrete parameter (generalized) multivariate stochastic process} or, if $\Lambda = \mathbb{R}^{d}$, we have a \emph{d-time continuous parameter (generalized) multivariate stochastic process}. \\
Since, in our study we shall use Hilbert space operator methods, $\Phi_{\lambda},\linebreak \lambda\in\Lambda$ will be required to be strongly (or weakly) square integrable and, without restraining the generality, of zero mean, i.e. $\Phi_\lambda \in L^2_{s,0} (\wp, H)$ (or $\Phi_\lambda \in L^2_{w,0} (\wp, H)\subset \Bc(L^2_0(\wp), H)$). So, in these cases we speak about
$\{ \Phi_\lambda\}_{\lambda \in \Lambda}$ as being a \emph{(generalized) multivariate second order stochastic process}.
For an embedding as before, see also \cite{Weron} or \cite{ChoWeDiss}, where Banach space valued random variables are considered and the term of \emph{generalized second order stochastic process} in this sense first appears.

When $d>1$, or even when $\Lambda$ is a locally compact abelian group, the term \emph{(generalized) random field} is preferred instead of \emph{(generalized) stochastic process}, while
when $\Lambda$ is an arbitrary index set, the term \emph{(generalized) stochastic mapping} is the most appropriate.

So, the basic concept which we shall use in this paper will be a function $\Phi$ defined on an arbitrary set $\Lambda$,
\begin{equation}\label{112}
\Lambda\ni \lambda\mapsto \Phi(\lambda) \in {\mathscr H} \left(\text{ or }\Bc(L^2_0(\wp), H)\right),
\end{equation}
which we shall call \textit{(generalized) multivariate second order stochastic mapping}, briefly (g.)m.s.o.s.m.

We shall mention further how several such concepts from the scientific literature fit in our present setting. When $\Lambda$ is a separable metric space (which can automatically be regarded as a measurable space with respect to a positive measure on the $\sigma$-algebra $\Sigma = {\mathcal B}or\Lambda$ of Borel sets), then we find ourself with multivariate stochastic mappings in the framework of \emph{infinite dimensional random mappings} from the paper \cite{Thang2}.
Let us mention that in this framework regularity conditions (some kinds of continuity or measurability) for m.s.o.s.m. occur naturally. They can also be formulated even in some enlarged frameworks. For instance, if the parameter set $\Lambda$ is a separate topological space, then we can - obviously - consider \textit{continuous m.s.o.s.m.} (a.k.a. {\em stochastically continuous} \cite{Thang2}), while if $\Lambda$ is a measurable space, i.e. it is endowed with a positive measure $\nu$ on a $\sigma$-algebra $\Sigma$ of subsets of $\Lambda$ ($\Lambda = (\Lambda, \Sigma, \nu)$), we may speak about \textit{measurable m.s.m.} (see \cite[pp. 54]{Thang2}), as well as of (\textit{square}) \textit{summable} or \textit{locally (square) summable m.s.o.s.m.} (see \cite{Mak}, \cite{PGapercorr}).

Also covered by this concept are the {\em stochastic or random measures} (see \cite{Thang4}), \emph{stochastic or random integrals} and the \emph{random operators} (see \cite[pp. 54, 55]{Thang2}). For example, when $\Lambda = H$  and $\Phi$ is a continuous linear operator from $H$ to $ L^{r}_s(\wp, H)$, then it is known as a \emph{random bounded operator} (see \cite{Sko} and, for recent results, \cite{Thang2}, \cite{Thang3}, \cite{Thang4}),
whereas if $\Lambda$ is a dense subspace in $H$, then a closed linear operator from $\Lambda$ to $L^{r}_s(\wp, H)$ is a \emph{random (not necessarily bounded) operator} (see \cite{Hack5}).

The concept of univariate second order stochastic mapping was mentioned as such by H. Niemi in  \cite[pp. 6]{Niemi2}, while the concept of the generalized multivariate second order stochastic mapping was introduced by P. Masani in \cite{MasDilat} under the name \emph{hilbertian variety}. In the framework of this last concept we may place also the works of W. Hackenbroch \cite{Hack4} on Hilbert space operator valued processes, S. A. Chobanyan and A. Weron \cite{ChoWeDiss} on prediction theory in Banach spaces and of I. Suciu and I. Valu\c sescu \cite{SucVal},\cite{Val80}, \cite{Val83} about the study of stochastic processes in the context of complete correlated actions.

An important tool in the development of the theory in all mentioned areas is the reproducing kernel technique for Hilbert spaces and Hilbert $C^*$-modules which are well presented in \cite{BCR}, respectively in \cite{Szaf}.

In what follows we restrain ourselves to the study of \emph{multivariate second order (m.s.o.) stochastic mappings}, which cover not only some particular concepts used in the very well developed theory of \emph{m.s.o. random fields} (see \cite{Kaki}), but also an extension which we have in view.

Namely, if we intent to consider the m.s.o. stochastic processes not only as $\Hs$-valued or $\Bc(L^2_0(\wp), H)$-valued \emph{functions} on $\R^{d}$, but more generally as $\Hs$-valued, respectively $\Bc(L^2_0(\wp), H)$-valued \emph{distributions} on $\mathbb{R}^{d}$ (see \cite{Schw2}), then it is necessary to have $\Lambda=\Dc(\R^{d})=\Dc_{d}$, the space of test functions in the theory of distributions. For such a {(generalized) m.s.o. stochastic mapping} we shall use the term \emph{(generalized) m.s.o. random distribution field (m.s.o.r.d.f.)}.

Mentioning that, for the univariate one time parameter case, such an extension of stochastic processes was considered for the first time by K. It\^{o} (\cite{Ito}) and I.M.Gelfand (\cite{Gelf1}) in 1953 and 1955 respectively and then for the finite variate $d$ time parameteres case in 1957 by A. M. Yaglom \cite{Yaglom57} (see also \cite{FleischKooh}, \cite{GelfVil}, \cite{Yaglom52}, \cite{Yaglom87}, \cite{Yadrenko}), we emphasize that the starting point of our research was the extension of the theory of m.s.o. random fields on $\R^{d}$ (treated in \cite{Kaki}) to the \emph{m.s.o. random distribution fields}.

\section{Correlation theory of multivariate second order stochastic mappings}

In this section we consider \emph{(continuous) m.s.o. stochastic mappings}, indexed over an abstract set (topological space) $\Lambda$, which in what follows will be denoted (in both cases) by $\bM (\Lambda, \Hs)$. In analogy to \cite[Section 4.1]{Kaki} we associate to such stochastic mappings \emph{the vector domain, the modular domain, the measurements space and the operator and scalar (cross) covariance functions}. The corresponding
modular and vector domains are then characterized as reproducing kernel structures (a Hilbert module and a Hilbert space) considered in \cite[Section 2.4]{Kaki}, reproduced by a positive definite $\Cs_1(H)$-valued kernel on $\Lambda$, which is just the operator covariance function of the stochastic mapping. Analogously the measurements space will be identified as a Hilbert space reproduced by the scalar covariance function of the stochastic mapping.
Also for a general positive definite $\Cs_1(H)$-valued kernel, as considered in
\cite[Section 2.4]{Kaki}, a Kolmogorov type factorization in terms of a m.s.o. stochastic mapping is obtained.
Thus, this Section can be regarded as being complementary to Section 2.4. from the book \cite{Kaki}.

For a given m.s.o.s.m. $\Phi\in\bM(\Lambda,\Hs)$ we denote by $\Hs_{(\Phi)}$ the closed linear subspace of $L_{s,0}^{2}(\wp, H)$ generated by the values $\{\Phi(\mu), \mu\in \Lambda\}$ and call it \emph{the vector domain} of $\Phi$, while the \emph{modular domain} of $\Phi$ is the closure $\Hs_{\Phi}$ in $L_{s,0}^{2}(\wp, H)$ of the submodule
\begin{equation}\label{1323}
\Hs^{0}_{\Phi} :\ = \left\{\sum\limits_{i\in\mathbb{N}_{m}} a_{i} \Phi(\lambda_{i})\,,a_{i}\in\Bc(H),\lambda_{i}\in \Lambda, i\in\mathbb{N}_{m}, m\in \mathbb{N}\right\},
\end{equation}
where ${\mathbb N}_m = \{1,2,\dots, m\}$. Obviously, $\Hs_{(\Phi)} \subset  \Hs_{\Phi} \subset \Hs$. These terms can serve also to introduce in $\bM(\Lambda, \Hs)$ the relation of subordination, which will be useful in the general Wold decomposition. Namely, we shall say that $\Phi$ is \emph{subordinate (operator subordinate)} to $\Psi$ if $\Hs_{(\Phi)} \subset \Hs_{(\Psi)}$ ($\Hs_\Phi \subset \Hs_\Psi$, respectively).\\
Now, using the operator model of the normal Hilbert $\Bc(H)$-module $\Hs_{\Phi}$ (see Corr. 7 pp. 30 of \cite{Kaki}) and denoting $G_{\Phi}:=G_{\Hs_{\Phi}}$, which will be called the \emph{measurements space} of $\Phi$, we have the module isomorphisms
\begin{equation}\label{1326}
\Hs_{\Phi}\cong \mathscr{C}_{2}(G_{\Phi},H)=H\widehat{\otimes}_{2}\overline{G}_{\Phi} \cong H \widehat{\otimes}  G_\Phi,
\end{equation}
where $H \widehat{\otimes} G$ is the hilbertian tensor product of the Hilbert spaces $H$ and $G$ as in \cite{Kaki} pp.20.

Also for two elements $\Phi, \Psi \in \bM (\Lambda, \Hs)$ the
\textit{operator cross covariance function} $\Gamma_{\Phi,\Psi}$ will be defined as
\begin{equation}\label{1322}
\Gamma_{\Phi,\Psi}(\lambda,\mu):=\left[\Phi(\lambda),\Psi(\mu)\right]_{\Hs} ;\quad \lambda,\mu\in \Lambda,
\end{equation}
while the \textit{scalar cross covariance function} $\gamma_{\Phi, \Psi}$ is defined by
\begin{equation}\label{eq:corelscal}
\gamma_{\Phi,\Psi}(\lambda,\mu):=\mathrm{tr}\Gamma_{\Phi, \Psi}(\lambda, \mu);\quad \lambda,\mu \in \Lambda.
\end{equation}
When $\Phi = \Psi$, then we denote simply $\gamma_{\Phi, \Phi} =\ :\gamma_{\Phi}$ and $\Gamma_{\Phi, \Phi} =\ : \Gamma_{\Phi}$, which will be called the \emph{scalar covariance function}, respectively the \emph{operator covariance function} of $\Phi$.

Now, the operator covariance function $\Gamma_{\Phi}$ (the scalar one $\gamma_{\Phi}$, respectively) of $\Phi$ is a $\mathscr{C}_{1}(H)$-valued ($\mathbb{C}$-valued) \textit{positive definite kernel} on $\Lambda$, in the sense of the positivity from $\mathcal{B}(H)$, i.e. it holds
\begin{equation}\label{1327}
\sum\limits_{i,j\in \mathbb{N}_{m}}a_{i}\Gamma_{\Phi}(\lambda_{i},\lambda_{j})a_{j}^{\ast}\ge 0,
\end{equation}
for any $m\in\mathbb{N}$ and any finite systems $a_{1},\ldots,a_{m}\in \Bc(H),\  \lambda_{1},\ldots,\lambda_{m} \in \Lambda$, (respectively in the sense of the usual positivity in $\mathbb{C}$)
\begin{equation}\label{1328}
\sum\limits_{i,j\in \mathbb{N}_{m}}\alpha_{i}\overline{\alpha}_{j}\gamma_{\Phi}(\lambda_{i},\lambda_{j})\geq 0
\end{equation}
for any $m \in \mathbb{N}$ and any finite systems $\alpha_{1},\ldots,\alpha_{m}\in \mathbb{C}, \lambda_{1},\ldots,\lambda_{m}\in \Lambda)$.

Indeed the relation \eqref{1327} results by applying (\ref{1322}) and computing:

\begin{multline*}
\sum\limits_{i,j\in \mathbb{N}_{m}}a_{i}\Gamma_{\Phi}(\lambda_{i},\lambda_{j})a_{j}^{\ast}=\sum\limits_{i,j\in \mathbb{N}_{m}}a_{i}\left[\Phi(\lambda_{i}),\Phi (\lambda_{j})\right]_{\Hs}a_{j}^{\ast}=\\
\sum\limits_{i,j\in \mathbb{N}_{m}}\left[a_{i}\Phi(\lambda_{i}),a_{j}\Phi(\lambda_{j})\right]_{\Hs}=\left[\sum\limits_{i\in \mathbb{N}_{m}}a_{i}\Phi(\lambda_{i}),\sum\limits_{j\in \mathbb{N}_{m}}a_{j}\Phi(\lambda_{j})\right]_{\Hs}\geqslant 0,
\end{multline*}
while \eqref{1328} appears also in \cite{Niemi2}.

Note also that the positive definite $\mathscr{C}_{1}(H)$~-~valued kernel $\Gamma=\Gamma_{\Phi}$ reproduces a normal Hilbert  $\Bc(H)$-module $\Hs_{\Gamma_{\Phi}}$ (see \cite{Kaki} Section 2.4, Thm. 13, pp.37), as well as a Hilbert space $G_{\Gamma_{\Phi}}$ of $H$-valued functions on $\Lambda$ (see \cite{Kaki}, Prop. 23, Section 2.4, pp.44), while $\gamma=\gamma_{\Phi}$ as complex valued positive definite kernel reproduces a Hilbert space $K_{\gamma_{\Phi}}$, (see for example \cite{BCR}).\\
It is not hard to infer now, extending the correspondence $\Phi(\lambda) \mapsto \Gamma_{\Phi}(\lambda,\cdot)$ to a $\Bc(H)$-linear mapping between the generating submodules of $\Hs_{\Phi}$ and $\Hs_{\Gamma_{\Phi}}$, that these are isomorph (i.e. $\Hs_{\Phi}\approxeq \Hs_{\Gamma_{\Phi}}$) as normal Hilbert $\Bc(H)$-modules and also that the measurements space $G_{\Phi}$ associated to $\Phi$ is isomorph to the Hilbert space $G_{\Gamma_{\Phi}}$ ($G_{\Phi}\simeq G_{\Gamma_{\Phi}}$) having $\Gamma_{\Phi}$ as operator reproducing kernel.

Moreover extending the correspondence
$\Phi(\lambda)\mapsto \gamma_{\Phi}(\lambda,\cdot)$
to a linear mapping between generating subspaces of $\Hs_{(\Phi)}$ and of $K_{\gamma_{\Phi}}$, respectively, we have that these are also isomorph as Hilbert spaces.

On the other hand, similarly to (\ref{1326}) the module isomorphisms

\begin{equation}\label{1335}
\Hs_{\Gamma_{\Phi}}\cong \mathscr{C}_{2}\left(G_{\Gamma_{\Phi}},H\right)\cong H\widehat{\otimes}{G}_{\Gamma_{\Phi}}
\end{equation}
also hold.

Noticing that $G_{\Gamma_{\Phi}}$ is generated by elements of the form $\Gamma_{\Phi}(\cdot,\lambda)x$ with $\lambda\in \Lambda, x\in H$, as well as the fact that the normal Hilbert $\Bc(H)$-module isomorphism $\Hs_{\Phi}\approxeq \Hs_{\Gamma_{\Phi}}$ was constructed by the $\Bc(H)$-linear extension of the correspondence
$
\Phi(\lambda)\mapsto\Gamma_{\Phi}(\lambda,\cdot),
$
the measurements space $G_{\Phi}$ associated to the m.s.o. stochastic mapping  $\Phi$ coincides with the closed subspace in $L^{2}_{0}(\wp)$ generated by

\begin{equation}\label{1337}
\left\{V^*_{\Phi(\lambda)} x \, , \, \lambda\in \Lambda, \ x\in H\right\},
\end{equation}
where $V$ is the module isomorphism \eqref{eq:coresp.weak.rnd.var}.

The above results can be gathered in

\begin{teo}\label{3afirmcorel}
Given an arbitrary set (a topological space) $\Lambda$ and a (continuous) m.s.o. stochastic mapping
\(\Phi: \Lambda\rightarrow \Hs\)
the following assertions hold:
\begin{enumerate}
\item[(i)] the scalar covariance function $\gamma_{\Phi}$, the operator covariance function $\Gamma_{\Phi}$ respectively, is a complex, a $\mathscr{C}_{1}(H)$-valued respectively, positive definite (and continuous) kernel on $\Lambda$;
\item[(ii)] the space $\Hs_{(\Phi)}$ (the module $\Hs_{\Phi}$ respectively), associated to $\Phi$ is isomorph as Hilbert space, (as normal Hilbert  $\Bc(H)$-module respectively)  to the reproducing kernel Hilbert space $K_{\gamma_{\Phi}}$, (the reproducing kernel normal Hilbert $\Bc(H)$-module $\Hs_{\Gamma_{\Phi}}$ respectively);
\item[(iii)] the measurements space $G_{\Phi}$, associated to $\Phi$ is isomorph to the Hilbert space $G_{\Gamma_{\Phi}}$, having $\Gamma_{\Phi}$ as operatorial reproducing kernel. In this way the two Hilbert spaces ensure respectively the description of the $\Bc(H)$-modules $\Hs_{\Phi}, \Hs_{\Gamma_{\Phi}}$ by means of the operatorial models mentioned in (\ref{1326}), respectively (\ref{1335}).\\
    Moreover, $G_\Phi$ can be described directly in terms of the m.s.o. stochastic mapping $\Phi$ by \eqref{1337}.
\end{enumerate}
\end{teo}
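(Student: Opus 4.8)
The plan is to derive all three assertions from the reproducing-kernel results for normal Hilbert $\Bc(H)$-modules and for Hilbert spaces recalled before the statement (\cite[Section~2.4, Thm.~13 and Prop.~23]{Kaki}, and \cite{BCR} for the scalar case), the one genuinely new ingredient being a Kolmogorov-type identification of the generating submodule $\Hs^0_\Phi$ of \eqref{1323} with the submodule of $\Hs_{\Gamma_\Phi}$ spanned by the sections of the kernel. For (i), positive definiteness of $\Gamma_\Phi$ is exactly the displayed computation preceding the statement: substituting $\Gamma_\Phi(\lambda_i,\lambda_j)=[\Phi(\lambda_i),\Phi(\lambda_j)]_\Hs$, moving the operators $a_i$ inside the Gramian and using its module properties turns $\sum_{i,j}a_i\Gamma_\Phi(\lambda_i,\lambda_j)a_j^{*}$ into $\bigl[\sum_i a_i\Phi(\lambda_i),\sum_j a_j\Phi(\lambda_j)\bigr]_\Hs\ge 0$, which is \eqref{1327}; applying the trace, a positive linear functional on $\Cs_1(H)$, then yields \eqref{1328}, while $\Gamma_\Phi(\lambda,\mu)=[\Phi(\lambda),\Phi(\mu)]_\Hs\in\Cs_1(H)$. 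When $\Lambda$ carries a topology and $\Phi$ is continuous into the norm of $\Hs$, continuity of the Gramian $\Hs\times\Hs\to\Cs_1(H)$ and of $\mathrm{tr}\colon\Cs_1(H)\to\C$ give joint continuity of $\Gamma_\Phi$ and $\gamma_\Phi$ on $\Lambda\times\Lambda$.

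For (ii) at the module level, let $U_0$ be the extension of the correspondence $\Phi(\lambda)\mapsto\Gamma_\Phi(\lambda,\cdot)$ to a $\Bc(H)$-linear map on $\Hs^0_\Phi$. The reproducing property of $\Gamma_\Phi$ in $\Hs_{\Gamma_\Phi}$ gives $[\Gamma_\Phi(\lambda,\cdot),\Gamma_\Phi(\mu,\cdot)]_{\Hs_{\Gamma_\Phi}}=\Gamma_\Phi(\lambda,\mu)=[\Phi(\lambda),\Phi(\mu)]_\Hs$, so that for every finite system $a_i\in\Bc(H)$, $\lambda_i\in\Lambda$,
\begin{multline*}
\Bigl[\sum_i a_i\Gamma_\Phi(\lambda_i,\cdot),\sum_j a_j\Gamma_\Phi(\lambda_j,\cdot)\Bigr]_{\Hs_{\Gamma_\Phi}}
= \sum_{i,j}a_i\Gamma_\Phi(\lambda_i,\lambda_j)a_j^{*} \\
= \Bigl[\sum_i a_i\Phi(\lambda_i),\sum_j a_j\Phi(\lambda_j)\Bigr]_\Hs .
\end{multline*}
In particular the Gramian of the image vanishes whenever $\sum_i a_i\Phi(\lambda_i)=0$, so $U_0$ is well defined, and the same chain of equalities shows that $U_0$ preserves the Gramian, hence the module norm \eqref{eq:norm.mod.Hilb}. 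Since $\Hs^0_\Phi$ is dense in $\Hs_\Phi$ and $\{\Gamma_\Phi(\lambda,\cdot):\lambda\in\Lambda\}$ generates $\Hs_{\Gamma_\Phi}$, $U_0$ extends by continuity to a surjective Gramian-preserving bijection, that is, a normal Hilbert $\Bc(H)$-module isomorphism $\Hs_\Phi\approxeq\Hs_{\Gamma_\Phi}$. Running the identical argument with the $\C$-valued kernel $\gamma_\Phi$, the $L^2$-scalar product \eqref{111}, the identity $(\Phi(\lambda),\Phi(\mu))_{L^2_{s,0}(\wp,H)}=\mathrm{tr}\,\Gamma_\Phi(\lambda,\mu)=\gamma_\Phi(\lambda,\mu)=(\gamma_\Phi(\lambda,\cdot),\gamma_\Phi(\mu,\cdot))_{K_{\gamma_\Phi}}$, and $\C$-linear combinations only, identifies $\Hs_{(\Phi)}$ isometrically with $K_{\gamma_\Phi}$.

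For (iii), the measurements space is by definition the Hilbert space entering the operator model of a normal Hilbert $\Bc(H)$-module — $\Hs_\Phi\cong\Cs_2(G_\Phi,H)$, $\Hs_{\Gamma_\Phi}\cong\Cs_2(G_{\Gamma_\Phi},H)$ — and as such it is canonically attached to the module; hence the module isomorphism of (ii) induces a Hilbert-space isomorphism $G_\Phi\simeq G_{\Gamma_\Phi}$, and by \cite[Prop.~23, Section~2.4]{Kaki} this $G_{\Gamma_\Phi}$ is exactly the Hilbert space of $H$-valued functions on $\Lambda$ having $\Gamma_\Phi$ as operatorial reproducing kernel, so that the operatorial models \eqref{1326} and \eqref{1335} of $\Hs_\Phi$ and $\Hs_{\Gamma_\Phi}$ become explicit. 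Finally, since $G_{\Gamma_\Phi}$ is generated by the elements $\Gamma_\Phi(\cdot,\lambda)x$ ($\lambda\in\Lambda$, $x\in H$) and the isomorphism $\Hs_\Phi\approxeq\Hs_{\Gamma_\Phi}$ was built from $\Phi(\lambda)\mapsto\Gamma_\Phi(\lambda,\cdot)$, transporting these generators back through the module isomorphism $V$ of \eqref{eq:coresp.weak.rnd.var} shows that $G_\Phi$ coincides with the closed subspace of $L^2_0(\wp)$ generated by $\{V^{*}_{\Phi(\lambda)}x:\lambda\in\Lambda,\ x\in H\}$, i.e. \eqref{1337}.

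The step I expect to be the main obstacle is the well-definedness and Gramian-preservation of $U_0$ in (ii) — the Kolmogorov-factorization bookkeeping that lets one pass from coinciding module elements to coinciding kernel sections and back — together with checking that the operator-model description of the measurements space is natural enough that an isomorphism of the two modules really transports one measurements space onto the other; both, however, follow directly from the reproducing-kernel theorems cited from \cite{Kaki}.
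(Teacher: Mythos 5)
Your proposal is correct and follows essentially the same route as the paper, whose proof simply appeals to the considerations preceding the statement: the positive-definiteness computation for $\Gamma_\Phi$ (with the trace giving $\gamma_\Phi$), the Gramian-preserving $\Bc(H)$-linear extension of $\Phi(\lambda)\mapsto\Gamma_\Phi(\lambda,\cdot)$ (and its scalar analogue) built on the reproducing-kernel results of \cite[Section 2.4]{Kaki} and \cite{BCR}, the induced identification $G_\Phi\simeq G_{\Gamma_\Phi}$ with the description \eqref{1337}, and the remark that continuity of $\Phi$ yields continuity of $\Gamma_\Phi$. You merely make explicit the well-definedness and isometry checks that the paper leaves implicit, which is fine.
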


The {\it proof} being contained in the considerations preceding the theorem we shall only mention that, in the case where $\Lambda$ is a topological space, the continuity of $\Phi$, easily implies the continuity of $\Gamma_{\Phi}$, which in fact is equivalent to the continuity on the diagonal of $\Lambda\times\Lambda$.
\hfill \qedsymbol

The reproducing kernel technique for normal $\Bc(H)$-modules will be also used to characterize the subordination of two m.s.o.s.m. in terms of their operator (cross) covariance functions. Namely, in analogy to the characterization given in \cite{Kaki} we have

\begin{teo}\label{thm:subordination}
If $\Phi$ and $\Psi$ are two m.s.o.s.m. having the operator covariance functions $\Gamma_{\Phi}$ and $\Gamma_{\Psi}$ respectively, and the operator cross covariance function $\Gamma_{\Phi, \Psi}$, then $\Phi$ is subordinate to $\Psi$, iff the function $K_\lambda(\cdot)=\Gamma_{\Phi, \Psi}(\lambda,\cdot)$ is in the $\Bc(H)$-module $\Hs_{\Gamma_{\Psi}}$ with $\Gamma_{\Psi}$ as reproducing kernel and
\begin{equation}
[K_{\lambda},K_{\mu}]_{\Hs_{\Gamma_{\Psi}}}=\Gamma_{\Phi}(\lambda,\mu)\,;\quad\lambda,\,\mu \in\Lambda.
\end{equation}
\end{teo}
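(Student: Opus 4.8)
The plan is to exploit the module isomorphism $\Hs_{\Psi}\approxeq\Hs_{\Gamma_{\Psi}}$ from Theorem \ref{3afirmcorel}, which is the $\Bc(H)$-linear extension of the correspondence $\Psi(\mu)\mapsto\Gamma_{\Psi}(\mu,\cdot)$, and to transport the subordination condition $\Hs_{\Phi}\subset\Hs_{\Psi}$ through this isomorphism into the reproducing kernel module $\Hs_{\Gamma_{\Psi}}$. First I would observe that $\Phi$ is operator subordinate to $\Psi$ precisely when each value $\Phi(\lambda)$ lies in $\Hs_{\Psi}$; under the isomorphism above, the natural candidate for the image of $\Phi(\lambda)$ is the function $K_{\lambda}(\cdot)=\Gamma_{\Phi,\Psi}(\lambda,\cdot)=[\Phi(\lambda),\Psi(\cdot)]_{\Hs}$, since for the generating elements $a\Psi(\mu)$ of $\Hs^{0}_{\Psi}$ one computes $[\Phi(\lambda),a\Psi(\mu)]_{\Hs}=[\Phi(\lambda),\Psi(\mu)]_{\Hs}a^{\ast}=\Gamma_{\Phi,\Psi}(\lambda,\mu)a^{\ast}$, which is exactly the value at $\mu$ of the Gramian $[K_{\lambda},a\Gamma_{\Psi}(\mu,\cdot)]_{\Hs_{\Gamma_{\Psi}}}$ computed via the reproducing property of $\Gamma_{\Psi}$.

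Next, for the forward implication, assuming $\Phi$ subordinate to $\Psi$, I would let $P$ denote the Gramian-orthogonal projection of $\Hs_{\Psi}$ onto the submodule $\Hs_{\Phi}$ and consider $\Phi(\lambda)$ as an element of $\Hs_{\Psi}$. Applying the module isomorphism $\Hs_{\Psi}\approxeq\Hs_{\Gamma_{\Psi}}$ sends $\Phi(\lambda)$ to some element $\widetilde K_{\lambda}\in\Hs_{\Gamma_{\Psi}}$, and pairing $\widetilde K_{\lambda}$ against the reproducing elements $\Gamma_{\Psi}(\mu,\cdot)x$ and using Gramian-preservation gives $\widetilde K_{\lambda}(\mu)=[\Phi(\lambda),\Psi(\mu)]_{\Hs}=\Gamma_{\Phi,\Psi}(\lambda,\mu)$, so $\widetilde K_{\lambda}=K_{\lambda}$ and in particular $K_{\lambda}\in\Hs_{\Gamma_{\Psi}}$. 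The Gramian identity then follows because the isomorphism preserves the $\Cs_{1}(H)$-valued inner product:
\begin{equation*}
[K_{\lambda},K_{\mu}]_{\Hs_{\Gamma_{\Psi}}}=[\Phi(\lambda),\Phi(\mu)]_{\Hs}=\Gamma_{\Phi}(\lambda,\mu).
\end{equation*}
For the converse, assuming $K_{\lambda}\in\Hs_{\Gamma_{\Psi}}$ with the stated Gramian identity, I would define a map on the generating submodule $\Hs^{0}_{\Phi}$ by $\sum_{i}a_{i}\Phi(\lambda_{i})\mapsto\sum_{i}a_{i}K_{\lambda_{i}}\in\Hs_{\Gamma_{\Psi}}\approxeq\Hs_{\Psi}$ and check, using \eqref{1327}-style computations together with $[K_{\lambda},K_{\mu}]_{\Hs_{\Gamma_{\Psi}}}=\Gamma_{\Phi}(\lambda,\mu)=[\Phi(\lambda),\Phi(\mu)]_{\Hs}$, that this map is Gramian-preserving; hence it is well-defined and extends to an isometric module embedding of $\Hs_{\Phi}$ into $\Hs_{\Psi}$ fixing each $\Phi(\lambda)$, which yields $\Hs_{\Phi}\subset\Hs_{\Psi}$.

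The main obstacle I anticipate is the bookkeeping needed to justify that the candidate correspondence is genuinely \emph{well-defined} — i.e. that two different representations $\sum_{i}a_{i}\Phi(\lambda_{i})=\sum_{j}b_{j}\Phi(\mu_{j})$ are sent to the same element — which is where the Gramian identity $[K_{\lambda},K_{\mu}]_{\Hs_{\Gamma_{\Psi}}}=\Gamma_{\Phi}(\lambda,\mu)$ does the real work, rather than being a mere normalization; and dually, that $K_{\lambda}$ is indeed the image of $\Phi(\lambda)$ under the abstract isomorphism rather than just \emph{a priori} an unrelated element of $\Hs_{\Gamma_{\Psi}}$. Once one checks that the reproducing property of $\Gamma_{\Psi}$ in $\Hs_{\Gamma_{\Psi}}$ — namely $[f,\Gamma_{\Psi}(\mu,\cdot)x]_{\Hs_{\Gamma_{\Psi}}}=(f(\mu),x)_{H}$ — correctly pairs with the Gramian $[\Psi(\lambda),\Psi(\mu)]_{\Hs}=\Gamma_{\Psi}(\lambda,\mu)$ under the isomorphism of Theorem \ref{3afirmcorel}, everything else is a routine density-and-closure argument, closely paralleling the scalar reproducing-kernel characterization of subordination in \cite{Kaki}.
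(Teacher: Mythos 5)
Your route is genuinely different from the paper's: the paper reduces the statement to the product mapping $\Phi\times\Psi\in\bM(\Lambda,\Hs\times\Hs)$, whose operator covariance is the $2\times 2$ block kernel built from $\Gamma_\Phi$, $\Gamma_\Psi$, $\Gamma_{\Phi,\Psi}$, and then runs the argument of Theorem 9.3 in \cite{Kaki}; you instead transport the subordination directly through the isomorphism $\Hs_\Psi\approxeq\Hs_{\Gamma_\Psi}$ of Theorem \ref{3afirmcorel}, which is a legitimate and more self-contained strategy. Your forward implication is essentially complete: if $\Phi(\lambda)\in\Hs_\Psi$, its image under the gramian-preserving isomorphism has value at $\mu$ equal to $[\Phi(\lambda),\Psi(\mu)]_{\Hs}=\Gamma_{\Phi,\Psi}(\lambda,\mu)$ by the reproducing property, so it is $K_\lambda$, and the gramian identity follows. (Two small slips: the projection $P$ you introduce there is never used, and the reproducing identity you quote at the end, $[f,\Gamma_\Psi(\mu,\cdot)x]_{\Hs_{\Gamma_\Psi}}=(f(\mu),x)_H$, is the one for the Hilbert space $G_{\Gamma_\Psi}$ of $H$-valued functions; in the module $\Hs_{\Gamma_\Psi}$ the elements are $\Cs_1(H)$-valued and the property reads $[f,a\Gamma_\Psi(\mu,\cdot)]_{\Hs_{\Gamma_\Psi}}=f(\mu)a^*$, as you in fact used correctly in your first paragraph.)

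The converse, however, has a genuine gap at the words ``fixing each $\Phi(\lambda)$''. From $[K_\lambda,K_\mu]_{\Hs_{\Gamma_\Psi}}=\Gamma_\Phi(\lambda,\mu)$ alone you only obtain that $\sum_i a_i\Phi(\lambda_i)\mapsto\sum_i a_iK_{\lambda_i}$ is a well-defined gramian isometry of $\Hs^0_\Phi$ into $\Hs_{\Gamma_\Psi}\approxeq\Hs_\Psi$; an abstract gramian-isometric embedding of $\Hs_\Phi$ into $\Hs_\Psi$ does not yield the set inclusion $\Hs_\Phi\subset\Hs_\Psi$ inside $\Hs$ --- that would require the embedding to act as the identity on the $\Phi(\lambda)$'s, which is precisely the statement to be proved, and it does not follow from gramian preservation (two gramian-unitarily equivalent mappings with gramian-orthogonal domains would admit such an embedding without any inclusion). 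To close the gap you must use that $K_\lambda$ is the cross covariance, not merely its gramian relations: let $P$ be the gramian projection of $\Hs$ onto the closed submodule $\Hs_\Psi$ and $J:\Hs_\Psi\to\Hs_{\Gamma_\Psi}$ the isomorphism; your first-paragraph computation gives $[J^{-1}K_\lambda,\Psi(\mu)]_{\Hs}=K_\lambda(\mu)=\Gamma_{\Phi,\Psi}(\lambda,\mu)=[\Phi(\lambda),\Psi(\mu)]_{\Hs}$ for all $\mu$, hence $J^{-1}K_\lambda=P\Phi(\lambda)$; then the hypothesis $[K_\lambda,K_\lambda]_{\Hs_{\Gamma_\Psi}}=\Gamma_\Phi(\lambda,\lambda)=[\Phi(\lambda),\Phi(\lambda)]_{\Hs}$ forces $[(\Phi(\lambda)-P\Phi(\lambda)),(\Phi(\lambda)-P\Phi(\lambda))]_{\Hs}=0$, i.e. $\Phi(\lambda)=P\Phi(\lambda)\in\Hs_\Psi$, which gives $\Hs_\Phi\subset\Hs_\Psi$. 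With these lines added your argument is complete; note also that, being modular, it establishes operator subordination ($\Hs_\Phi\subset\Hs_\Psi$), which is how the theorem's ``subordinate'' must be read.
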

The {\it proof} is based on the fact that the operator cross covariance function $\Gamma_{\Phi, \Psi}$ of $\Phi, \Psi \in \bM(\Lambda, \Hs)$ appears in the expression of the operator covariance function of the product $\Phi\times \Psi \in \bM(\Lambda, \Hs\times\Hs)$ ($\Hs \times \Hs$ being the ``product'' $\Bc(H)$-module as in \cite[Section 4.9, pp. 192]{Kaki}), namely
\[
\Gamma_{\Phi\times\Psi}(\lambda, \mu) = \begin{pmatrix} \Gamma_\Phi (\lambda, \mu) & \Gamma_{\Phi, \Psi} (\lambda, \mu) \\ \Gamma_{\Psi, \Phi} (\lambda, \mu) & \Gamma_\Psi (\lambda, \mu) \end{pmatrix} ,\quad \lambda, \mu \in \Lambda.
\]
Having in view the preceding results, the rest of the proof runs similarly as in Theorem 9.3, pp. 193 from \cite[Chap.4]{Kaki}. \hfill \qedsymbol

If we consider the operator correlation mapping

\begin{equation}\label{preimage}
\mathbf{M}(\Lambda, \Hs)\ni\Phi \mapsto \Gamma_{\Phi}\in \mathbf{\Gamma}\left(\Lambda,\mathscr{C}_{1}(H)\right),
\end{equation}
from the set of m.s.o.s.m. $\mathbf{M}(\Lambda, \Hs)$ to the set of all positive definite $\Cs_1(H)$-valued kernels on $\Lambda$ (see \eqref{1327}), $\mathbf{\Gamma}\left(\Lambda,\mathscr{C}_{1}(H)\right)$, then the next theorem proves its
surjectivity and gives a description of the pre-image of each $\Gamma\in\mathbf{\Gamma}\left(\Lambda,\mathscr{C}_{1}(H)\right)$.

\begin{teo}\label{3afirmASM}
\begin{enumerate}
\item[(i)] If $\Lambda$ is an arbitrary set (a topological space), then for any positive definite $\mathscr{C}_{1}(H)$~-~valued kernel $\Gamma$ on $\Lambda$ (continuous on $\Lambda\times \Lambda$), there exists a (continuous) multivariate second order stochastic mapping $\Phi: \Lambda\rightarrow \Hs$, which assures for $\Gamma$ a Kolmogorov factorization:
    \begin{equation}\label{eq:Kolm.fact}
    \Gamma(\lambda, \mu)=[\Phi(\lambda), \Phi(\mu)]_{\Hs}\ ;\ \lambda, \mu \in \Lambda,
    \end{equation}
    i.e. $\Gamma$ coincides with the operator covariance function $\Gamma_{\Phi}$ of $\Phi$.
\item[(ii)] If $\Phi_{1}$ and $\Phi_{2}$ are two (continuous) m.s.o. stochastic mappings on $\Lambda$ for which $\Gamma_{\Phi_{1}}=\Gamma_{\Phi_{2}}$, then $\Phi_{1}$ and $\Phi_{2}$ are gramian unitary equivalent, i.e. there exists a gramian unitary operator $\mathrm{W}:\Hs_{\Phi_{2}}\rightarrow \Hs_{\Phi_{1}}$ such that
\end{enumerate}
\begin{equation}\label{1339}
\Phi_{1}(\lambda)= \mathrm{W} \Phi_{2}(\lambda)\, , \quad \lambda\in \Lambda.
\end{equation}
\end{teo}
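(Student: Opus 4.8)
The plan is to prove the two assertions separately, relying heavily on the reproducing-kernel description established in Theorem \ref{3afirmcorel} and in \cite[Section 2.4]{Kaki}. For part (i), the natural candidate for $\Phi$ is obtained by \emph{identifying} the stochastic mapping with its own reproducing kernel module. Concretely, given a positive definite $\mathscr{C}_1(H)$-valued kernel $\Gamma$ on $\Lambda$, invoke \cite[Section 2.4, Thm. 13]{Kaki} to get the reproducing kernel normal Hilbert $\Bc(H)$-module $\Hs_\Gamma$, whose generating elements are the functions $\Gamma(\lambda,\cdot)$, with the reproducing property $[a\,\Gamma(\lambda,\cdot),b\,\Gamma(\mu,\cdot)]_{\Hs_\Gamma}=a\,\Gamma(\lambda,\mu)\,b^*$ for $a,b\in\Bc(H)$. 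By the operator model \eqref{1335} we have a gramian isomorphism $\Hs_\Gamma\cong\mathscr{C}_2(G_\Gamma,H)$ where $G_\Gamma$ is the associated measurements Hilbert space, and $\mathscr{C}_2(G_\Gamma,H)\hookrightarrow\mathscr{C}_2(L^2_0(\wp),H)=\Hs$ once one fixes an isometric embedding of the separable Hilbert space $G_\Gamma$ into $L^2_0(\wp)$ (possible because $(\Omega,\As,\wp)$ can be taken rich enough, or $G_\Gamma$ is separable and $L^2_0(\wp)$ is infinite dimensional). Composing, define
\begin{equation}
\Phi(\lambda):=\iota\bigl(\Gamma(\lambda,\cdot)\bigr)\in\Hs,\qquad\lambda\in\Lambda,
\end{equation}
where $\iota:\Hs_\Gamma\hookrightarrow\Hs$ is the resulting gramian-preserving module embedding. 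Then
\begin{equation}
[\Phi(\lambda),\Phi(\mu)]_{\Hs}=[\Gamma(\lambda,\cdot),\Gamma(\mu,\cdot)]_{\Hs_\Gamma}=\Gamma(\lambda,\mu),
\end{equation}
which is exactly the required Kolmogorov factorization \eqref{eq:Kolm.fact}, and consequently $\Gamma_\Phi=\Gamma$. For the continuity addendum, if $\Lambda$ is a topological space and $\Gamma$ is continuous on $\Lambda\times\Lambda$, then in particular it is continuous on the diagonal; since $\|\Phi(\lambda)-\Phi(\mu)\|^2=\operatorname{tr}\bigl(\Gamma(\lambda,\lambda)-\Gamma(\lambda,\mu)-\Gamma(\mu,\lambda)+\Gamma(\mu,\mu)\bigr)$, continuity of $\Gamma$ forces continuity of $\Phi$ (using that the trace norm controls the relevant quantities, as in the remark after Theorem \ref{3afirmcorel}).

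For part (ii), suppose $\Phi_1,\Phi_2\in\bM(\Lambda,\Hs)$ have $\Gamma_{\Phi_1}=\Gamma_{\Phi_2}=:\Gamma$. By Theorem \ref{3afirmcorel}(ii), each modular domain is gramian isomorphic to the \emph{same} reproducing kernel module: $\Hs_{\Phi_1}\approxeq\Hs_\Gamma\approxeq\Hs_{\Phi_2}$, and — crucially — both isomorphisms are the $\Bc(H)$-linear extensions of the correspondences $\Phi_k(\lambda)\mapsto\Gamma(\lambda,\cdot)$. I would first check that the $\Bc(H)$-linear map $\mathrm{W}_0$ defined on the generating submodule $\Hs^0_{\Phi_2}$ by
\begin{equation}
\mathrm{W}_0\Bigl(\sum_{i\in\N_m}a_i\Phi_2(\lambda_i)\Bigr):=\sum_{i\in\N_m}a_i\Phi_1(\lambda_i)
\end{equation}
is well defined and gramian preserving: indeed, for two such finite sums,
\begin{equation}
\Bigl[\sum_i a_i\Phi_2(\lambda_i),\sum_j b_j\Phi_2(\mu_j)\Bigr]_{\Hs}=\sum_{i,j}a_i\Gamma(\lambda_i,\mu_j)b_j^*=\Bigl[\sum_i a_i\Phi_1(\lambda_i),\sum_j b_j\Phi_1(\mu_j)\Bigr]_{\Hs},
\end{equation}
so in particular a sum vanishes in $\Hs_{\Phi_2}$ iff the corresponding sum vanishes in $\Hs_{\Phi_1}$ (well-definedness and injectivity), and $\mathrm{W}_0$ preserves the gramian on the generating submodule, hence the norm. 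Since $\Hs^0_{\Phi_2}$ is dense in $\Hs_{\Phi_2}$ and $\mathrm{W}_0$ is bounded and gramian preserving, it extends to a gramian isometry $\mathrm{W}:\Hs_{\Phi_2}\to\Hs_{\Phi_1}$; surjectivity follows because its range contains the dense submodule $\Hs^0_{\Phi_1}$ and is closed (being the image of a complete module under a gramian isometry). Thus $\mathrm{W}$ is a gramian unitary, and $\mathrm{W}\Phi_2(\lambda)=\Phi_1(\lambda)$ by construction, giving \eqref{1339}.

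The only genuinely delicate point is the well-definedness and closed-range argument for $\mathrm{W}$ in part (ii): one must be careful that equality of gramians on generating elements really transfers kernel and closure information correctly in the $\Bc(H)$-module setting (as opposed to the Hilbert space setting), i.e. that a gramian-preserving densely defined module map extends to a gramian unitary between the completions. This is precisely the content of the uniqueness half of the Kolmogorov/GNS-type construction for normal Hilbert $\Bc(H)$-modules, and it is available in \cite[Section 2.4]{Kaki} (the reproducing kernel module $\Hs_\Gamma$ is unique up to gramian isomorphism); so the proof reduces to citing that uniqueness together with Theorem \ref{3afirmcorel}(ii). For part (i), the only subtlety is ensuring the separable Hilbert space $G_\Gamma$ embeds isometrically into $L^2_0(\wp)$, which holds whenever the underlying probability space is non-atomic (or can be enlarged), a harmless standing assumption in this theory. \hfill\qedsymbol
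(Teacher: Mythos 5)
Your proposal is correct and follows essentially the same route as the paper: part (i) via the reproducing kernel normal Hilbert $\Bc(H)$-module $\Hs_\Gamma$, its operator model $\Cs_2(G_\Gamma,H)$ and the embedding of $G_\Gamma$ into $L^2_0(\wp)$ with $\Phi(\lambda):=\Gamma(\lambda,\cdot)$, and part (ii) via the gramian-preserving, $\Bc(H)$-linearly extended map on the generating submodules, extended by continuity to a gramian unitary. The only difference is that you spell out details the paper leaves implicit (the continuity of $\Phi$ in the topological case and the well-definedness of $\mathrm{W}_0$), which is harmless.
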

\begin{proof}
(i) Let $\Gamma$ be a positive definite ${\mathscr C}_1(H)$-valued kernel on $\Lambda$ and $\Hs_\Gamma$ ($G_\Gamma$ respectively) the normal Hilbert $\Bc(H)$-module (Hilbert space respectively) reproduced by $\Gamma$, generated as in \cite{Kaki} Sec.2.4,Thm.13,pp.37 or Prop.23,pp.44, with the gramian (respectively the scalar product) defined there. Since, by Thm. 2.4., pp. 45 of \cite{Kaki}, ${\mathscr C}_2(G_\Gamma, H)$ is an operatorial model for $\Hs_\Gamma$, embedding $G_\Gamma$ into a space of the form $L^2_0(\wp)$ (see \cite{Rao1}) $\Hs_{\Gamma}$ embeds into a normal Hilbert $\Bc(H)$-module of the form $\mathscr{C}_{2}\left(L^{2}_{0}(\wp),H\right)$. \\
In this way the $\Hs_{\Gamma}$-valued function defined by
   \( \Phi(\lambda):=\Gamma(\lambda,\cdot),\  (\lambda\in \Lambda) \)
gives us exactly the m.s.o. stochastic mapping we seek, since we get for any $\lambda,\mu \in \Lambda$
    \begin{equation*}
    \Gamma_{\Phi}(\lambda,\mu)=\left[\Phi(\lambda),\Phi(\mu)\right]_{\Hs}=\left[\Gamma(\lambda,\cdot),
    \Gamma(\mu,\cdot)\right]_{\Hs_{\Gamma}}=\Gamma(\lambda,\mu).
    \end{equation*}
(ii) From the hypothesis on $\Phi_1$ and $\Phi_2$ we have
    \begin{equation*}
    \left[\Phi_{1}(\lambda),\Phi_{1}(\mu)\right]_{\Hs}=\left[\Phi_{2}(\lambda),\Phi_{2}(\mu)\right]_{\Hs},\quad \lambda,\mu \in \Lambda,
    \end{equation*}
    which means that the mapping $\mathrm{W}$ defined on $\left\{\Phi_{2}(\mu), \mu\in \Lambda\right\}$ by

    \begin{equation*}
    \mathrm{W}\Phi_{2}(\mu)=\Phi_{1}(\mu)\, , \quad \mu\in \Lambda
    \end{equation*}
    preserves the gramian, is easily extended by $\Bc(H)$-linearity, still conserving the gramian and is clearly surjective between the $\Bc(H)$-modules $\Hs_{\Phi_{2}}^{0}$ and $\Hs_{\Phi_{1}}^{0}$ (see (\ref{1323})). This allows the extension of ${\mathrm W}$ by continuity to a gramian unitary operator from $\Hs_{\Phi_{2}}$ to $\Hs_{\Phi_{1}}$, satisfying (\ref{1339}).
    \end{proof}

\begin{rem}
Analogous results, formulated in a Hilbert space setting, hold for the scalar correlation mapping
\begin{equation}
\bM (\Lambda,\Hs)\ni\Phi\mapsto\gamma_{\Phi}\in \bgam(\Lambda),
\end{equation}
where $\bgam(\Lambda)$ is the set of complex valued positive definite kernels on $\Lambda$.
\end{rem}

\section{Multivariate second order random distribution fields and their covariance distributions}

In this Section we illustrate how the results from the previous Section can be applied to the case of multivariate second order (m.s.o.) random distribution fields.

For completeness we shall work with the \emph{$m$-order $(0\leqslant m\leqslant\infty)$ m.s.o. random distribution field} $U$, which is a continuous linear m.s.o stochastic mapping having the index set $\Lambda=\Dc_{d}^{m}$, the space of complex valued compactly supported continuously derivable  functions up to the order $m\, (m=0,1,2,\ldots,\infty)$. In other words $U\in \big(\Dc^{m}_{d}\big)'(\Hs):=\Bc(\Dc_{d}^{m}, \Hs)$, i.e. it is a $\Hs$-valued $m$-order distribution on $\mathbb{R}^{d}$ (see also \cite{Schw2}). For $m=0$, $U$ will be a ($\Hs$-valued) m.s.o. random Radon measure and for $m=\infty$, the index $m$ and the ``$m$-order''  will be omitted, otherwise $U$ will be  called of \emph{finite $m$-order}.

Now for $U, V\in \big(\Dc^{m}_{d}\big)'(\Hs)$ the scalar cross covariance function $\gamma_{U,V}$ and the operator cross covariance function $\Gamma_{U,V}$ are defined by (\ref{eq:corelscal}) and \eqref{1322}.
We shall comment only the properties of $\Gamma_{U,V}$, which -- from the linearity and continuity of $U$ and $V$ -- is sesquilinear and continuous on $\Dc_d^m\times \Dc_d^m$. Moreover $\Gamma_U$ is obviously positive definite, i.e. it satisfies a relation of the form (\ref{1327}).

It is possible as in Theorems \ref{3afirmcorel} and \ref{3afirmASM} to describe and to characterize the m.s.o. random distribution fields, their modular and vector domains and measurements space in terms of such kernels. We prefer but to do that by using everywhere a distributional framework, where instead of $\Cs_1(H)$-valued sesquilinear continuous kernels on $\Dc_d^m\times\Dc_d^m$, we use the so-called distribution kernels on $\R^d$ in the sense of L. Schwartz (see \cite[I, pp. 138]{Schw1}), which are $m$~-~order distributions on $\mathbb{R}^{2d}$.
Therefore we need the following Lemma, where we identify $\Dc_{2d}^m$ with the inductive tensor product $\Dc_d^m\otimes_i \Dc_d^m$ (see for example \cite[pp. 84]{Groth}).
\begin{lem}\label{lem:trei.unu}
Let $\Gamma$ be a $\Cs_1(H)$-valued sesquilinear kernel on $\Dc_d^m$, which is continuous on the diagonal of $\Dc_d^m\times\Dc_d^m$. Then there is an $m$-order distribution kernel $C_\Gamma$ on $\R^d$ such that
\begin{equation}
C_\Gamma (\varphi\otimes\psi) = \Gamma(\varphi,\bar\psi),\quad \varphi,\psi\in\Dc_d^m.
\end{equation}
Moreover, when $\Gamma$ is positive on the diagonal, then $C_\Gamma$ is a positive definite $m$-order distribution kernel.
\end{lem}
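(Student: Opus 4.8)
\textbf{Proof plan for Lemma \ref{lem:trei.unu}.}

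The plan is to construct $C_\Gamma$ in two stages: first as a $\Cs_1(H)$-valued (equivalently, scalar after pairing with trace-class duals, but let us keep it operator-valued) continuous linear functional on the inductive tensor product $\Dc_d^m\otimes_i\Dc_d^m\cong\Dc_{2d}^m$, and then to verify the two asserted properties. The starting observation is that $\Gamma\colon\Dc_d^m\times\Dc_d^m\to\Cs_1(H)$ is sesquilinear; composing the second argument with the (conjugate-linear) complex-conjugation map $\psi\mapsto\bar\psi$ on $\Dc_d^m$ turns it into a genuine \emph{bilinear} map $B(\varphi,\psi):=\Gamma(\varphi,\bar\psi)$ on $\Dc_d^m\times\Dc_d^m$. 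By the universal property of the inductive (= projective, here, since $\Dc_d^m$ is nuclear) tensor product, a separately continuous bilinear map on $\Dc_d^m\times\Dc_d^m$ factors through a continuous linear map on $\Dc_d^m\otimes_i\Dc_d^m$; identifying the latter with $\Dc_{2d}^m$ as indicated in the excerpt (citing \cite[pp.~84]{Groth}), we obtain the desired $C_\Gamma\in\big(\Dc_{2d}^m\big)'(\Cs_1(H))$, i.e. an $m$-order distribution kernel on $\R^d$, satisfying $C_\Gamma(\varphi\otimes\psi)=\Gamma(\varphi,\bar\psi)$ by construction.

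\textbf{First step (continuity).} The only nontrivial input is that separate continuity of $B$ suffices to get joint continuity and hence the factorization: this follows because $\Dc_d^m$ is a (strict inductive limit of) Fréchet space(s), so it is barrelled, and separately continuous bilinear maps on products of barrelled (DF or Fréchet) spaces are hypocontinuous, which is enough to extend to the completed tensor product. However, the hypothesis only gives continuity of $\Gamma$ \emph{on the diagonal}; so I would first argue, by a polarization identity in the two $\Dc_d^m$-variables (writing $\Gamma(\varphi,\psi)=\frac14\sum_{k=0}^3 i^k\,\Gamma(\varphi+i^k\psi,\varphi+i^k\psi)$, valid because $\Gamma$ is sesquilinear), that diagonal continuity already forces separate (indeed joint) continuity of $\Gamma$, and therefore of $B$. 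This is the step I expect to be the main obstacle, not because it is deep but because one must be careful that the polarization is taken in the right (sesquilinear) sense and that "continuous on the diagonal'' is interpreted as continuity of $\varphi\mapsto\Gamma(\varphi,\varphi)$; once that is pinned down, nuclearity of $\Dc_d^m$ makes the tensor-product factorization automatic.

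\textbf{Second step (positivity).} Suppose $\Gamma$ is positive on the diagonal, meaning $\Gamma(\varphi,\varphi)\ge0$ in $\Bc(H)$ for every $\varphi\in\Dc_d^m$ (equivalently the kernel $(\varphi,\psi)\mapsto\Gamma(\varphi,\psi)$ is positive definite in the sense of \eqref{1327}, which for a \emph{sesquilinear} kernel on a vector space is equivalent to $\sum_{i,j}\langle\Gamma(\varphi_i,\varphi_j)x_j,x_i\rangle\ge0$ for all finite families, and this in turn reduces to the diagonal values by taking $\varphi=\sum_i\varphi_i$ suitably scaled — actually one must invoke that a sesquilinear form with $\Gamma(\varphi,\varphi)\ge0$ is automatically positive definite, which is the operator-valued Cauchy–Schwarz argument). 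Then, to show $C_\Gamma$ is a positive definite $m$-order distribution kernel, I unwind the definition of positive-definiteness for distribution kernels in the sense of Schwartz: $C_\Gamma$ is positive definite iff $C_\Gamma(\varphi\otimes\bar\varphi)\ge0$ for all $\varphi\in\Dc_d^m$ (and, in the operator-valued setting, after testing against vectors of $H$), which by the defining identity is exactly $\Gamma(\varphi,\varphi)\ge0$ — the hypothesis. For the general finite-sum form $\sum_{i,j}a_i\,C_\Gamma(\varphi_i\otimes\bar\varphi_j)\,a_j^\ast\ge0$ one expands using bilinearity of $C_\Gamma$ and sesquilinearity of $\Gamma$, reducing it to $\sum_{i,j}a_i\Gamma(\varphi_i,\varphi_j)a_j^\ast\ge0$, i.e. to \eqref{1327} for $\Gamma$, which holds since $\Gamma$ is positive definite. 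This closes the proof; the verification here is essentially bookkeeping once the correct definition of positivity for Schwartz distribution kernels (via the diagonal, $\varphi\otimes\bar\varphi$) is used.
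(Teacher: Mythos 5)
Your construction is essentially the paper's own proof: define $C_\Gamma$ on elementary tensors by $C_\Gamma(\varphi\otimes\psi)=\Gamma(\varphi,\bar\psi)$, extend by linearity and then by continuity using the identification $\Dc^m_{2d}=\Dc^m_d\widehat{\otimes}_i\Dc^m_d$, and read off positive definiteness of $C_\Gamma$ from positivity of $\Gamma$ on the diagonal; your polarization argument merely makes explicit the continuity step the paper dismisses as ``easily seen''. The only caveats are side remarks, not the route itself: the nuclearity parenthesis is off for finite $m$, and the passage from diagonal positivity to the full operator-coefficient positive definiteness \eqref{1327} is asserted in the paper with no more justification than you give.
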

\begin{proof}
First, from the hypothesis, it is easily seen that the kernel $\Gamma$ is continuous on $\Dc_d^m \times \Dc_d^m$ and when $\Gamma(\varphi, \varphi) \ge 0,\ \varphi \in \Dc_d^m$, then $\Gamma$ is also positive definite. We shall attach an $m$-order $\mathscr{C}_{1}(H)$~-~valued distribution $C_{\Gamma}$ on $\mathbb{R}^{2d}$ as follows. Define first $C_{\Gamma}$ on the elementary tensors $\varphi\otimes\psi$ from $\Dc^{m}_{d}\otimes\Dc^{m}_{d}$ by
$C_{\Gamma}(\varphi\otimes\psi):= \Gamma(\varphi,\overline{\psi})$,
extend that by linearity, and then
by continuity to the whole $\Dc^{m}_{d}\widehat{\otimes}_{i}\Dc^{m}_{d}=\Dc^{m}_{2d}$ preserving the notation $C_{\Gamma}$. \\
So $C_{\Gamma}\in\big(\Dc^{m}_{2d}\big)'(\mathscr{C}_{1}(H))$, which, when $\Gamma$ is positive on the diagonal, will be a positive definite distribution kernel on $\R^d$. We shall refer to this as $C_{\Gamma}\in pd(\Dc^{m}_{2d})'\big(\Cs_{1}(H)\big)$.
\end{proof}

Since, for each $U,V\in \big(\Dc^{m}_{d}\big)'(\Hs)$ the operator cross covariance functions $\Gamma_{U,V}$, $\Gamma_U$ satisfy the hypotheses from Lemma \ref{lem:trei.unu}, the existence of the distribution kernels $C_{\Gamma_{U,V}}$ and $C_{\Gamma_{U}}$ is assured, the last one being even positive definite. These will be called the \emph{operator cross covariance distribution} of $U$ and $V$, respectively the \emph{operator covariance distribution of} the m.s.o. random distribution field \emph{$U$} and denoted by $C_{U,V}$, respectively $C_U$. Correspondingly $c_{U,V}$ defined by $c_{U,V}(\chi):=tr C_{U,V}(\chi), \, \chi\in \Dc^{m}_{2d}$, will be called the \emph{scalar cross covariance distribution} of $U$ and $V$, respectively $c_U = c_{U,U}$ the \emph{scalar covariance distribution} of $U$.\\
By applying Theorem \ref{3afirmcorel}, the following description of the domains associated to a m.s.o.r.d.f. in terms of its covariance distribution holds.
\begin{cor}\label{cor3.2}
For a given m.s.o. random distribution field $U\in\big(\Dc^{m}_{d}\big)'(\Hs)$, the modular domain $\Hs_{U}$, respectively the measurements space $G_{U}$ will be isomorph to the $\Bc(H)$-module $\Hs_{\Gamma^{C_{U}}}$, respectively  the Hilbert space $G_{\Gamma^{C_{U}}}$ both reproduced by the $\mathscr{C}_{1}(H)$-valued kernel $\Gamma^{C_{U}}$, while the vector domain $\Hs_{(U)}$ of $U$ is isomorphic to the  Hilbert space reproduced by the complex valued kernel $\gamma^{C_{U}}$.
\end{cor}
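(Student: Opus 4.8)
\textbf{Proof proposal for Corollary \ref{cor3.2}.}
The plan is to reduce the statement entirely to Theorem \ref{3afirmcorel} applied to $U$, translating each object attached to the $\Cs_1(H)$-valued positive definite kernel $\Gamma_U$ on $\Dc_d^m$ into the corresponding object attached to the operator covariance distribution $C_U$. First I would recall that, by the discussion following Lemma \ref{lem:trei.unu}, the operator covariance function $\Gamma_U$ and the operator covariance distribution $C_U \in pd(\Dc^m_{2d})'(\Cs_1(H))$ determine each other via $C_U(\varphi\otimes\psi)=\Gamma_U(\varphi,\bar\psi)$, so one may define $\Gamma^{C_U}:=\Gamma_U$ and $\gamma^{C_U}:=\gamma_U=\mathrm{tr}\,\Gamma_U$; the point is simply that the reproducing kernel Hilbert module $\Hs_{\Gamma^{C_U}}$, the Hilbert space $G_{\Gamma^{C_U}}$ and the Hilbert space $K_{\gamma^{C_U}}$ of Corollary \ref{cor3.2} are, by definition, the structures reproduced by $\Gamma_U$ and $\gamma_U$ as in \cite[Section 2.4]{Kaki}.

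Next I would verify that $U$ satisfies the hypotheses of Theorem \ref{3afirmcorel}: by definition $U\in(\Dc^m_d)'(\Hs)=\Bc(\Dc^m_d,\Hs)$ is a continuous linear m.s.o.\ stochastic mapping on the topological vector space $\Lambda=\Dc^m_d$, and as noted in Section 4 its operator covariance function $\Gamma_U$ is sesquilinear, continuous on $\Dc^m_d\times\Dc^m_d$, and positive definite in the sense of \eqref{1327} — hence it is a $\Cs_1(H)$-valued continuous positive definite kernel on $\Lambda$. Then Theorem \ref{3afirmcorel}(ii) gives directly that the modular domain $\Hs_U$ is isomorphic as a normal Hilbert $\Bc(H)$-module to $\Hs_{\Gamma_U}=\Hs_{\Gamma^{C_U}}$ and that the vector domain $\Hs_{(U)}$ is isomorphic as a Hilbert space to $K_{\gamma_U}=K_{\gamma^{C_U}}$, while part (iii) gives that the measurements space $G_U$ is isomorphic to the Hilbert space $G_{\Gamma_U}=G_{\Gamma^{C_U}}$ having $\Gamma_U$ as operatorial reproducing kernel, and the operatorial models \eqref{1326}, \eqref{1335} carry over.

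The only genuine content beyond quoting Theorem \ref{3afirmcorel} is the bookkeeping that the distributional notation $\Gamma^{C_U}$, $\gamma^{C_U}$ is consistent, i.e.\ that passing through the distribution kernel $C_U$ and back recovers $\Gamma_U$, $\gamma_U$ unchanged; this is immediate from the construction in Lemma \ref{lem:trei.unu}, since $\Dc^m_d\otimes_i\Dc^m_d$ is dense in $\Dc^m_{2d}$ and the maps involved are continuous, so $C_U$ is the unique continuous extension of $\varphi\otimes\psi\mapsto\Gamma_U(\varphi,\bar\psi)$ and $\Gamma_U$ is recovered by restriction to elementary tensors. I expect the main (though minor) obstacle to be purely notational: making precise, once and for all, the identification of a positive definite $\Cs_1(H)$-valued sesquilinear kernel on $\Dc^m_d$ with its associated positive definite $m$-order distribution kernel on $\R^d$, so that the reproducing-kernel structures in Corollary \ref{cor3.2} — written in terms of $C_U$ — literally coincide with those produced by Theorem \ref{3afirmcorel} from $\Gamma_U$. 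Once that identification is fixed, the corollary is a direct specialization of Theorem \ref{3afirmcorel} to $\Lambda=\Dc^m_d$ and $\Phi=U$. \hfill\qedsymbol
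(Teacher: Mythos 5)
Your proposal is correct and matches the paper's own argument: the paper likewise obtains Corollary \ref{cor3.2} as a direct application of Theorem \ref{3afirmcorel} to $\Lambda=\Dc_d^m$, $\Phi=U$, with the identification $\Gamma^{C_U}(\varphi,\psi)=C_U(\varphi\otimes\bar\psi)=\Gamma_U(\varphi,\psi)$ (and $\gamma^{C_U}=\gamma_U$) supplied by Lemma \ref{lem:trei.unu} and the definition of $C_U$. Your extra remark on density of $\Dc_d^m\otimes_i\Dc_d^m$ in $\Dc_{2d}^m$ is exactly the bookkeeping implicit in the paper's construction, so nothing is missing.
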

The operator cross covariance distribution together with the operator covariance distributions of two m.s.o.r.d.f. will be used to obtain from Theorem \ref{thm:subordination} the following characterization of their subordination. \begin{cor}
Given two m.s.o.r.d.f. $U$ and $V$ then $U$ is subordinate to $V$, iff for each fixed $\varphi \in \Dc_d$, the function $K_\varphi (\cdot)$ given by
\[
\Dc_d \ni \psi \mapsto C_{U,V} (\varphi \otimes \bar \psi) \in \Cs_1(H)
\]
lies in the $\Bc(H)$-module $\Hs_{\Gamma^{C_V}}$ and $\bigl[K_\varphi , K_\psi\bigr]_{\Hs_{\Gamma^{C_V}}} = C_U (\varphi \otimes \bar \psi)$.
\end{cor}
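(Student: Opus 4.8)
The plan is to reduce this corollary to the abstract subordination criterion of Theorem \ref{thm:subordination} by unwinding the dictionary established in Lemma \ref{lem:trei.unu} between sesquilinear $\Cs_1(H)$-valued kernels on $\Dc_d$ and distribution kernels on $\R^d$. Recall that, as stochastic mappings, $U$ and $V$ are elements of $\bM(\Dc_d, \Hs)$ (they are continuous m.s.o. stochastic mappings indexed by $\Lambda = \Dc_d$), so Theorem \ref{thm:subordination} applies verbatim with $\Phi = U$, $\Psi = V$: the mapping $U$ is subordinate to $V$ if and only if the function $K_\varphi(\cdot) = \Gamma_{U,V}(\varphi, \cdot)$ lies in the $\Bc(H)$-module $\Hs_{\Gamma_V}$ and $[K_\varphi, K_\psi]_{\Hs_{\Gamma_V}} = \Gamma_U(\varphi, \psi)$ for all $\varphi, \psi \in \Dc_d$.

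The second step is to pass from the operator cross covariance \emph{function} $\Gamma_{U,V}$ to the operator cross covariance \emph{distribution} $C_{U,V} = C_{\Gamma_{U,V}}$. By the defining relation in Lemma \ref{lem:trei.unu}, $C_{U,V}(\varphi \otimes \psi) = \Gamma_{U,V}(\varphi, \bar\psi)$, hence $\Gamma_{U,V}(\varphi, \psi) = C_{U,V}(\varphi \otimes \bar\psi)$; thus the function $\psi \mapsto C_{U,V}(\varphi \otimes \bar\psi)$ appearing in the statement is literally $K_\varphi(\cdot)$ up to this identification. Likewise $\Gamma_V$ corresponds to $C_V$, and the reproducing kernel module $\Hs_{\Gamma_V}$ appearing in Theorem \ref{thm:subordination} is, by the construction preceding Corollary \ref{cor3.2} (where $\Hs_{\Gamma^{C_V}}$ denotes precisely the $\Bc(H)$-module reproduced by the kernel associated to $C_V$), the same as $\Hs_{\Gamma^{C_V}}$. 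Finally $\Gamma_U(\varphi, \psi) = C_U(\varphi \otimes \bar\psi)$ by the same dictionary. Substituting these three identifications into the conclusion of Theorem \ref{thm:subordination} yields exactly the stated condition: $K_\varphi \in \Hs_{\Gamma^{C_V}}$ and $[K_\varphi, K_\psi]_{\Hs_{\Gamma^{C_V}}} = C_U(\varphi \otimes \bar\psi)$.

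The one point that needs a little care — and is the main obstacle, such as it is — is the compatibility of the reproducing-kernel structures: one must check that the Hilbert module reproduced by the distribution kernel $C_V$ (via the correspondence $C_V \leftrightarrow \Gamma^{C_V}$) coincides, as a normal Hilbert $\Bc(H)$-module, with the module $\Hs_{\Gamma_V}$ reproduced by the sesquilinear kernel $\Gamma_V$ of Theorem \ref{thm:subordination}. This is immediate once one observes that $\Gamma^{C_V}$ and $\Gamma_V$ are the \emph{same} kernel read through the identification $\Dc_{2d}^m \cong \Dc_d^m \otimes_i \Dc_d^m$ of Lemma \ref{lem:trei.unu}: evaluating $\Gamma^{C_V}$ on a pair $(\varphi, \psi)$ means evaluating $C_V$ on $\varphi \otimes \bar\psi$, which by definition of $C_V$ returns $\Gamma_V(\varphi, \psi)$. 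Hence the generating submodules are identified and so are their completions. With this verified, the corollary follows by direct substitution, and no further computation is required.

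\begin{proof}
Regard $U$ and $V$ as continuous m.s.o. stochastic mappings indexed by $\Lambda = \Dc_d$, so that $U, V \in \bM(\Dc_d, \Hs)$, and apply Theorem \ref{thm:subordination} with $\Phi = U$, $\Psi = V$. It gives: $U$ is subordinate to $V$ if and only if the function $\psi \mapsto \Gamma_{U,V}(\varphi, \psi)$ belongs, for each $\varphi \in \Dc_d$, to the $\Bc(H)$-module $\Hs_{\Gamma_V}$ with reproducing kernel $\Gamma_V$, and
\[
[\Gamma_{U,V}(\varphi, \cdot), \Gamma_{U,V}(\psi, \cdot)]_{\Hs_{\Gamma_V}} = \Gamma_U(\varphi, \psi), \qquad \varphi, \psi \in \Dc_d.
\]
By the construction in Lemma \ref{lem:trei.unu} and the definition of $C_{U,V}$, $C_U$, one has $\Gamma_{U,V}(\varphi, \psi) = C_{U,V}(\varphi \otimes \bar\psi)$ and $\Gamma_U(\varphi, \psi) = C_U(\varphi \otimes \bar\psi)$ for all $\varphi, \psi \in \Dc_d$; in particular the function displayed in the statement, $\psi \mapsto C_{U,V}(\varphi \otimes \bar\psi)$, is precisely $K_\varphi(\cdot) = \Gamma_{U,V}(\varphi, \cdot)$. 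Moreover the kernel $\Gamma^{C_V}$ associated to the distribution $C_V$ satisfies $\Gamma^{C_V}(\varphi, \psi) = C_V(\varphi \otimes \bar\psi) = \Gamma_V(\varphi, \psi)$, so $\Gamma^{C_V}$ and $\Gamma_V$ are the same $\Cs_1(H)$-valued sesquilinear kernel on $\Dc_d$; consequently the reproducing $\Bc(H)$-modules $\Hs_{\Gamma^{C_V}}$ and $\Hs_{\Gamma_V}$ coincide, together with their gramians. Substituting these identifications into the criterion above yields: $U$ is subordinate to $V$ if and only if $K_\varphi \in \Hs_{\Gamma^{C_V}}$ for every $\varphi \in \Dc_d$ and $[K_\varphi, K_\psi]_{\Hs_{\Gamma^{C_V}}} = C_U(\varphi \otimes \bar\psi)$, which is the assertion of the corollary.
\end{proof}
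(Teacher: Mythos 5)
Your proof is correct and follows the same route the paper intends: the corollary is obtained directly from Theorem \ref{thm:subordination} applied to $U,V\in\bM(\Dc_d,\Hs)$, translated through the dictionary $\Gamma(\varphi,\psi)=C(\varphi\otimes\bar\psi)$ of Lemma \ref{lem:trei.unu} (so that $\Gamma^{C_V}=\Gamma_V$ and $\Hs_{\Gamma^{C_V}}=\Hs_{\Gamma_V}$). Your explicit verification that the two reproducing-kernel modules coincide is exactly the identification the paper leaves implicit, so nothing is missing.
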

The operator correlation mapping (\ref{preimage}) becomes
\begin{equation}\label{3.5}
(\Dc_{d}^{m})'(\Hs)\ni U \mapsto C_{U} \in pd (\Dc_{2d}^{m})'(\Cs_{1}(H)),
\end{equation}
which we shall call the \emph{operator covariance distribution mapping}.
Its properties are contained in the following Corollaries.

\begin{cor}\label{cor3.1}
For an $m$-order distribution kernel on $\R^d$, $C\in\big(\Dc^{m}_{2d}\big)'(\mathscr{C}_{1}(H))$ the following statements are equivalent:
\begin{itemize}
\item[(i)] $C\in pd\big(\Dc^{m}_{2d}\big)'(\mathscr{C}_{1}(H))$; i.e. the operatorial kernel $\Gamma^{C}$ on $\Dc^{m}_{d}$ defined by
\(
\Gamma^{C}(\varphi,\psi):=C(\varphi\otimes\overline{\psi})\ (\varphi,\psi\in \Dc^{m}_{d})
\)
is positive definite.
\item[(ii)] The operatorial kernel $\Gamma^{C}$ is sesquilinear and positive (in the sense of positivity from $\Bc(H)$) on the diagonal of $\Dc^{m}_{d}\times\Dc^{m}_{d}$.
\item[(iii)] There is a m.s.o. random distribution field $U\in\big(\Dc^{m}_{d}\big)'(\mathscr{H})$  such that $C=C_{U}$ (i.e. $C$ has a Kolmogorov type factorization).
\end{itemize}
\end{cor}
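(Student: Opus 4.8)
The plan is to prove the cyclic chain $\mathrm{(i)}\Rightarrow\mathrm{(ii)}\Rightarrow\mathrm{(iii)}\Rightarrow\mathrm{(i)}$, the only non-formal link being $\mathrm{(ii)}\Rightarrow\mathrm{(iii)}$, i.e.\ the Kolmogorov factorization of a positive definite distribution kernel by a random distribution field. The two short links come first. For $\mathrm{(i)}\Rightarrow\mathrm{(ii)}$: the kernel $\Gamma^{C}(\varphi,\psi)=C(\varphi\otimes\overline{\psi})$ is automatically sesquilinear, since $C$ is linear on $\Dc^{m}_{2d}=\Dc^{m}_{d}\widehat{\otimes}_{i}\Dc^{m}_{d}$ and $(\varphi,\psi)\mapsto\varphi\otimes\overline{\psi}$ is linear in $\varphi$ and conjugate-linear in $\psi$; positivity on the diagonal is the $m=1$, $a_{1}=I_{H}$ instance of the defining inequality \eqref{1327} for $\Gamma^{C}$. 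For $\mathrm{(iii)}\Rightarrow\mathrm{(i)}$: if $C=C_{U}=C_{\Gamma_{U}}$, then, reading off the defining formulas on elementary tensors, $\Gamma^{C}(\varphi,\psi)=C_{\Gamma_{U}}(\varphi\otimes\overline{\psi})=\Gamma_{U}(\varphi,\psi)$, and $\Gamma_{U}$ is positive definite because it is the operator covariance function of a m.s.o.\ stochastic mapping (the computation verifying \eqref{1327} in Section 3), so $\Gamma^{C}$ is positive definite, which is exactly $\mathrm{(i)}$.

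For $\mathrm{(ii)}\Rightarrow\mathrm{(iii)}$, first note that, $C$ being a continuous linear map $\Dc^{m}_{2d}\to\Cs_{1}(H)$ and the tensor map $\Dc^{m}_{d}\times\Dc^{m}_{d}\to\Dc^{m}_{2d}$ being (hypo)continuous, $\Gamma^{C}$ is sesquilinear and continuous on $\Dc^{m}_{d}\times\Dc^{m}_{d}$; together with positivity on the diagonal this forces $\Gamma^{C}$ to be positive definite (polarization, exactly the observation made at the start of the proof of Lemma \ref{lem:trei.unu}). Thus $\Gamma:=\Gamma^{C}$ is a positive definite, continuous $\Cs_{1}(H)$-valued kernel on the topological space $\Lambda=\Dc^{m}_{d}$, so Theorem \ref{3afirmASM}(i) yields a continuous m.s.o.\ stochastic mapping $\Phi\colon\Dc^{m}_{d}\to\Hs$ with $\Gamma_{\Phi}=\Gamma^{C}$. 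The essential extra point is that the $\Phi$ produced there is the concrete one, $\Phi(\varphi)=\Gamma^{C}(\varphi,\cdot)\in\Hs_{\Gamma^{C}}\hookrightarrow\Hs$, which is \emph{linear} in $\varphi$ precisely because $\Gamma^{C}$ is sesquilinear, and which is continuous because $\|\Phi(\varphi)\|_{\Hs}^{2}=\mathrm{tr}\,\Gamma^{C}(\varphi,\varphi)=\mathrm{tr}\,C(\varphi\otimes\overline{\varphi})$ depends continuously on $\varphi$. Hence $U:=\Phi\in\Bc(\Dc^{m}_{d},\Hs)=(\Dc^{m}_{d})'(\Hs)$ is a genuine m.s.o.r.d.f., with $\Gamma_{U}=\Gamma^{C}$, so $C_{U}=C_{\Gamma^{C}}$; since $C_{\Gamma^{C}}(\varphi\otimes\psi)=\Gamma^{C}(\varphi,\overline{\psi})=C(\varphi\otimes\psi)$ on elementary tensors, $C_{U}=C$ by density, giving $\mathrm{(iii)}$.

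The book-keeping identities — that $\Gamma^{C}$ inherits sesquilinearity and joint continuity from the distribution $C$, that $C_{\Gamma^{C}}=C$, and that $\Gamma^{C_{U}}=\Gamma_{U}$ — are all obtained by checking the defining formulas on elementary tensors and extending by continuity, and need no further comment. The one genuinely delicate point, and the one I expect to be the main obstacle, is bridging the gap between the conclusion of Theorem \ref{3afirmASM}(i) — which only supplies a stochastic mapping on the bare set $\Dc^{m}_{d}$ — and the requirement that a random distribution field be a \emph{continuous linear} element of $\Bc(\Dc^{m}_{d},\Hs)$. This is resolved, as above, by retaining the explicit model $\Phi(\varphi)=\Gamma^{C}(\varphi,\cdot)$ from the proof of that theorem and exploiting the sesquilinearity and continuity of $\Gamma^{C}$, which in turn reflect the fact that $C$ is an $\Cs_{1}(H)$-valued $m$-order distribution on $\mathbb{R}^{2d}$.
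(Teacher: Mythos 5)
Your proof is correct and follows essentially the route the paper intends for this corollary: combine Lemma \ref{lem:trei.unu} (continuity plus positivity on the diagonal gives a positive definite kernel $\Gamma^{C}$) with the Kolmogorov factorization of Theorem \ref{3afirmASM}(i), the remaining implications being formal checks on elementary tensors. Your explicit verification that the factorizing mapping $\Phi(\varphi)=\Gamma^{C}(\varphi,\cdot)$ is linear and continuous, hence a genuine element of $\big(\Dc^{m}_{d}\big)'(\Hs)$, is exactly the point the paper leaves implicit, and it is handled correctly.
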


The uniqueness (up to a gramian unitary equivalence) of $U$ in (iii) of the previous Corollary results now from Theorem \ref{3afirmASM}(ii). More precisely it holds

\begin{cor}\label{cor3.3}
If the m.s.o. random distribution fields $U^{1},\,U^{2}\in \big(\Dc^{m}_{d}\big)'(\Hs)$ have the same operator covariance distribution, i.e. $C_{U^{1}}=C_{U^{2}}$, then there exists a gramian unitary operator $W:\Hs_{U^{1}}\rightarrow \Hs_{U^{2}}$ such that
\begin{equation}
W\,U^{1}_{\varphi}=U^{2}_{\varphi},\quad \varphi\in \Dc^{m}_{d}.
\end{equation}
\end{cor}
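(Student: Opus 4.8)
The plan is to reduce the statement to Theorem~\ref{3afirmASM}(ii), applied to $U^{1}$ and $U^{2}$ regarded as (continuous linear) m.s.o. stochastic mappings indexed by the abstract set $\Lambda=\Dc_{d}^{m}$, whose modular domains are precisely $\Hs_{U^{1}}$ and $\Hs_{U^{2}}$. The only point requiring verification is that the hypothesis $C_{U^{1}}=C_{U^{2}}$ on the operator covariance \emph{distributions} is equivalent to the equality $\Gamma_{U^{1}}=\Gamma_{U^{2}}$ of the operator covariance \emph{functions}, i.e.\ of the $\Cs_{1}(H)$-valued sesquilinear kernels on $\Dc_{d}^{m}\times\Dc_{d}^{m}$.

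First I would record that the passages $\Gamma\mapsto C_{\Gamma}$ of Lemma~\ref{lem:trei.unu} and $C\mapsto\Gamma^{C}$ of Corollary~\ref{cor3.1} are mutually inverse on the relevant classes: on elementary tensors $C_{\Gamma}(\varphi\otimes\overline{\psi})=\Gamma(\varphi,\overline{\overline{\psi}})=\Gamma(\varphi,\psi)$, so $\Gamma^{C_{\Gamma}}=\Gamma$. Specialising $\Gamma=\Gamma_{U^{i}}$ gives $\Gamma^{C_{U^{i}}}=\Gamma_{U^{i}}$. Consequently, from $C_{U^{1}}=C_{U^{2}}$ we obtain $\Gamma_{U^{1}}=\Gamma^{C_{U^{1}}}=\Gamma^{C_{U^{2}}}=\Gamma_{U^{2}}$; conversely the equality of the two covariance functions forces $C_{U^{1}}=C_{U^{2}}$ because $\Dc_{d}^{m}\otimes_{i}\Dc_{d}^{m}$ is dense in $\Dc_{2d}^{m}=\Dc_{d}^{m}\widehat{\otimes}_{i}\Dc_{d}^{m}$ and the distribution kernels $C_{U^{i}}$ are continuous — exactly the density/continuity argument already used in the proof of Lemma~\ref{lem:trei.unu}.

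With this in hand, $U^{1}$ and $U^{2}$ are two m.s.o. stochastic mappings on $\Lambda=\Dc_{d}^{m}$ with $\Gamma_{U^{1}}=\Gamma_{U^{2}}$, so Theorem~\ref{3afirmASM}(ii), applied with $\Phi_{1}:=U^{2}$ and $\Phi_{2}:=U^{1}$, produces a gramian unitary operator $W:\Hs_{\Phi_{2}}=\Hs_{U^{1}}\rightarrow\Hs_{\Phi_{1}}=\Hs_{U^{2}}$ with $\Phi_{1}(\varphi)=W\Phi_{2}(\varphi)$, that is $U^{2}_{\varphi}=W\,U^{1}_{\varphi}$ for every $\varphi\in\Dc_{d}^{m}$, which is the claimed conclusion.

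I do not expect a genuine obstacle here: the analytic content — defining $W$ on the generating system $\{U^{1}_{\varphi}\}_{\varphi\in\Dc_{d}^{m}}$ by $W U^{1}_{\varphi}=U^{2}_{\varphi}$, extending it by $\Bc(H)$-linearity to the submodule $\Hs^{0}_{U^{1}}$ while preserving the gramian, and then extending by continuity to a gramian unitary onto $\Hs_{U^{2}}$ — is already carried out in Theorem~\ref{3afirmASM}(ii). The one place that deserves an explicit word is the faithfulness of the distributional reformulation, i.e.\ the second step above; but since $C_{U^{i}}$ arises from $\Gamma_{U^{i}}$ through the continuous extension of Lemma~\ref{lem:trei.unu}, no information is lost and the two formulations are interchangeable.
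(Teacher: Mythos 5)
Your proposal is correct and follows the same route as the paper: the paper derives Corollary~\ref{cor3.3} directly from Theorem~\ref{3afirmASM}(ii), the equality $C_{U^{1}}=C_{U^{2}}$ being translated into $\Gamma_{U^{1}}=\Gamma_{U^{2}}$ via the correspondence of Lemma~\ref{lem:trei.unu} and Corollary~\ref{cor3.1}, exactly as you spell out. Your explicit check that $\Gamma^{C_{\Gamma}}=\Gamma$ (and the density/continuity remark for the converse, which is not even needed here) merely makes precise what the paper leaves implicit.
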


It would be of course interesting to know \emph{if each $\Cs_1(H)$-valued distribution kernel on $\R^d$ is the operator cross covariance distribution of some pair $U,V \in (\Dc_d^m)'(\Hs)$}, but this won't be our goal here. \\
Now we shall introduce in our general frame the concepts of determinism and nondeterminism and we give a general decomposition of Wold type of a m.s.o.r.d.f., $U$, into deterministic and purely nondeterministic parts, both summands being operator subordinate to $U$.

In \cite{Roz2} (see also \cite{Bala}), the observable space up to the moment $t_{0}$ for a random distribution for the case $d=1$ and $H=\mathbb{C}$ was defined. By using in $\R^d$ the ordering relation $s=(s_1, \dots , s_d) \le t = (t_1, \dots , t_d)$ if $s_j \le t_j,\ j=1,\dots,d$, we shall define by analogy these observable structures for m.s.o.r.d.f. up to the moment $t_0$.\\
To this purpose we introduce first the subspace $\Dc_{d}^{t_{0}}$ in $\Dc_{d}$ by
\begin{equation}
\varphi\in\Dc_{d}^{t_{0}}\quad\Leftrightarrow\quad \textrm{supp} \varphi \subset \left\{t\in \mathbb{R}^{d}\,:\,t\leqslant t_{0}\right\}.
\end{equation}
Thus, for a m.s.o.r.d.f. $U = \{ U_\varphi \}_{\varphi\in\Dc_d^m}$ we call \textit{the observable module} $\Hs_{U}^{t_{0}}$ (\textit{the observable space} $\Hs_{(U)}^{t_{0}}$) \textit{up to the moment $t_{0}$}, the closed $\Bc(H)$-module (space) in $\Hs$, generated by the set
$\left\{U_{\varphi}\quad,\, \varphi\in \Dc_{d}^{t_{0}}\right\}$.

We notice that the Hilbert $\Bc(H)$-module (Hilbert space) generated by $\bigcup\limits_{t\in \mathbb{R}^{d}}\Hs_{U}^{t}$ ($\bigcup\limits_{t\in \mathbb{R}^{d}}\Hs_{(U)}^{t}$ respectively) is just the modular domain $\Hs_{U}$ (vector domain respectively) of the m.s.o.r.d.f. $U$, while $\bigcap\limits_{t\in \mathbb{R}^{d}}\Hs_{U}^{t}$ ($\bigcap\limits_{t\in \mathbb{R}^{d}}\Hs_{(U)}^{t}$ respectively) will be called the \emph{remote past} module (space) of $U$ and denoted by $\Hs_{U}^{-\infty}$ ($\Hs_{(U)}^{-\infty}$ respectively). We shall say that $\varphi\mapsto U_{\varphi}$ is \textit{operator deterministic} (or, simply \textit{deterministic}), if $\Hs_{U}=\Hs_{U}^{-\infty}$ ($\Hs_{(U)}=\Hs_{(U)}^{-\infty}$ respectively). If  $\Hs_{U}^{-\infty}\subsetneqq \Hs_{U}$ ($\Hs_{(U)}^{-\infty}\subsetneqq \Hs_{(U)}$ respectively) we say that $U$ is \textit{operator nondeterministic} (\textit{nondeterministic}).

For extreme situations, where $\Hs_{U}^{-\infty}=\{0\}$ , ($\Hs_{(U)}^{-\infty}=\{0\}$ respectively), the term \emph{non-deterministic} becomes \emph{purely non-deterministic} (see also \cite{Wold}, \cite[Sec. 5.1]{Kaki} for the first, respectively the classical result). In this context we have the following general Wold decomposition for m.s.o.r.d.f.

\begin{teo}\label{thm:WoldRDF}
For a m.s.o.r.d.f. $\{U_{\varphi}\}_{\varphi\in \Dc_d}$ taking values in $\Hs$, there exists a unique decomposition

\begin{equation}\label{426}
U_{\varphi}=U_{\varphi}^{det}+U_{\varphi}^{p}\,,\quad \varphi\in \Dc_d
\end{equation}
of $U$ such that
\begin{enumerate}
\item[(i)]$\{U_{\varphi}^{det}\}_{\varphi\in \Dc_d}$ is operator deterministic and  $\{U_{\varphi}^{p}\}_{\varphi\in \Dc_d}$ is operator purely nondeterministic,
\item[(ii)] $U^{det}$ and $U^{p}$ are operator subordinate to $U$,
\item[(iii)] $U^{det}$ and $U^{p}$ are operator uncorrelated, i.e. the operator cross covariance distribution $C_{U^{det}, U^p}$ vanishes on $\Dc_{2d}$.
\end{enumerate}
Moreover the gramian orthogonal decomposition
\begin{equation*}
\Hs_{U}=\Hs_{U^{det}}\oplus \Hs_{U^{p}}
\end{equation*}
holds.
\end{teo}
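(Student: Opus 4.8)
The plan is to carry out the classical Wold construction directly inside the modular domain $\Hs_U$, viewed (via the operator model $\Hs_U\cong\Cs_2(G_U,H)$, see \eqref{1326}) as a normal Hilbert $\Bc(H)$-module in which closed submodules admit gramian-orthogonal complements. In particular the remote past module $\Hs_U^{-\infty}=\bigcap_{t\in\R^d}\Hs_U^{t}$ is the range of a gramian-orthogonal projection $P$ of $\Hs_U$ onto itself, and I would set
\[
U_\varphi^{det}:=P\,U_\varphi,\qquad U_\varphi^{p}:=(I-P)\,U_\varphi,\qquad \varphi\in\Dc_d .
\]
Since $P$ and $I-P$ are $\Bc(H)$-linear contractions, $U^{det}$ and $U^{p}$ are again continuous linear $\Hs$-valued maps on $\Dc_d$, i.e.\ m.s.o.r.d.f.'s, and \eqref{426} holds by construction.

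Next I would verify (i)--(iii) and the displayed decomposition by computing observable modules. For each $t\in\R^d$, applying $P$, resp.\ $I-P$, to the set generating $\Hs_U^{t}$ gives
\[
\Hs_{U^{det}}^{t}=\overline{P\,\Hs_U^{t}}=\Hs_U^{-\infty},\qquad
\Hs_{U^{p}}^{t}=\overline{(I-P)\,\Hs_U^{t}}=\Hs_U^{t}\ominus\Hs_U^{-\infty},
\]
the first identity because $\Hs_U^{-\infty}\subseteq\Hs_U^{t}$ is exactly the range of $P$, the second because $P$ fixes $\Hs_U^{-\infty}$ and annihilates its complement in $\Hs_U^{t}$. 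Intersecting over $t$ yields $\Hs_{U^{det}}^{-\infty}=\Hs_U^{-\infty}=\Hs_{U^{det}}$, so $U^{det}$ is operator deterministic, while $\Hs_{U^{p}}^{-\infty}=\bigcap_t(\Hs_U^{t}\ominus\Hs_U^{-\infty})=\{0\}$ — an element $z$ lying in all of these belongs to $\Hs_U^{-\infty}$ and is gramian orthogonal to $\Hs_U^{-\infty}$, whence $[z,z]_{\Hs}=0$ and $z=0$ — so $U^{p}$ is operator purely nondeterministic; this is (i). Letting $t$ range over $\R^d$ gives $\Hs_{U^{det}}=\Hs_U^{-\infty}\subseteq\Hs_U$ and $\Hs_{U^{p}}=\Hs_U\ominus\Hs_U^{-\infty}\subseteq\Hs_U$, which is (ii) and simultaneously the asserted $\Hs_U=\Hs_{U^{det}}\oplus\Hs_{U^{p}}$. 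Finally, for $\varphi,\psi\in\Dc_d$,
\[
C_{U^{det},U^{p}}(\varphi\otimes\bar\psi)=\bigl[U^{det}_\varphi,U^{p}_\psi\bigr]_{\Hs}=\bigl[P\,U_\varphi,(I-P)\,U_\psi\bigr]_{\Hs}=0,
\]
because the ranges of $P$ and $I-P$ are gramian orthogonal; hence $C_{U^{det},U^{p}}$ vanishes on elementary tensors and, by continuity of the covariance distribution (Lemma~\ref{lem:trei.unu}), on all of $\Dc_{2d}$, i.e.\ (iii).

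For uniqueness, let $U_\varphi=V_\varphi+W_\varphi$ be any decomposition satisfying (i)--(iii). By (iii) and $\Bc(H)$-linearity the generating submodules of $\Hs_V$ and $\Hs_W$ are gramian orthogonal, so $\Hs_V\perp\Hs_W$; by (ii), $\Hs_V,\Hs_W\subseteq\Hs_U$; and $U_\varphi=V_\varphi+W_\varphi$ forces $\Hs_U\subseteq\Hs_V\vee\Hs_W$. Thus $\Hs_U=\Hs_V\oplus\Hs_W$, so $V_\varphi$ and $W_\varphi$ must be the gramian-orthogonal projections of $U_\varphi$ onto $\Hs_V$ and $\Hs_W$; it then suffices to show $\Hs_V=\Hs_U^{-\infty}$, for then the projection onto $\Hs_V$ equals $P$ and $V=U^{det}$, $W=U^{p}$. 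Expanding $a\,U_\varphi=a\,V_\varphi+a\,W_\varphi$ with $\varphi\in\Dc_d^{t}$ gives $\Hs_U^{t}\subseteq\Hs_V^{t}\oplus\Hs_W^{t}$ for each $t$; intersecting over $t$ and using $\Hs_V^{t}=\Hs_V$ ($V$ deterministic) together with $\bigcap_t\Hs_W^{t}=\{0\}$ ($W$ purely nondeterministic) yields $\Hs_U^{-\infty}\subseteq\Hs_V$. The reverse inclusion amounts to $\Hs_V=\bigcap_t\Hs_V^{t}\subseteq\bigcap_t\Hs_U^{t}=\Hs_U^{-\infty}$, hence to the inclusions $\Hs_V^{t}\subseteq\Hs_U^{t}$, i.e.\ to the fact that, $U$ being a genuine distribution field on $\R^d$, the observable modules of its deterministic and purely nondeterministic summands sit inside the corresponding observable modules of $U$. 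This level-by-level compatibility with the support filtration $\{\Dc_d^{t}\}$ is the step requiring the most care, and the only place where the distributional framework of this Section is genuinely exploited; the rest is routine bookkeeping with gramian-orthogonal projections.
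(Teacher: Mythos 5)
Your existence argument is the paper's own: take the gramian projection $P$ onto the remote past module $\Hs_U^{-\infty}$ (every closed submodule of a normal Hilbert $\Bc(H)$-module admits one), set $U^{det}_\varphi=PU_\varphi$, $U^p_\varphi=(I-P)U_\varphi$ as in \eqref{427}, and verify (i)--(iii) together with $\Hs_U=\Hs_{U^{det}}\oplus\Hs_{U^p}$; your identities $\Hs_{U^{det}}^t=\Hs_U^{-\infty}$ and $\Hs_{U^p}^t=\Hs_U^t\ominus\Hs_U^{-\infty}$ are correct and give a clean version of the standard verification the paper only sketches. This half is fine.

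The gap is in the uniqueness part. After correctly obtaining $\Hs_U=\Hs_V\oplus\Hs_W$ and $\Hs_U^{-\infty}\subseteq\Hs_V$ for a competing decomposition $U=V+W$, you reduce the reverse inclusion $\Hs_V\subseteq\Hs_U^{-\infty}$ to the level-wise inclusions $\Hs_V^t\subseteq\Hs_U^t$, which you then merely assert as a ``fact'' that ``requires the most care''. That step is not a consequence of the stated hypotheses: condition (ii) is only the global inclusion $\Hs_V\subseteq\Hs_U$, and nothing in (i)--(iii) ties $V_\varphi$, for $\varphi\in\Dc_d^t$, to the observable module $\Hs_U^t$. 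Indeed, hypotheses (i)--(iii) alone do not force it: take (in the vector-domain picture, $d=1$; tensor with a fixed unit vector of $H$ to pass to the gramian setting) an orthonormal family $(e_n)_{n\in\Z}$, put $U_\varphi=\sum_n\varphi(n)e_n$ (purely nondeterministic), fix $v=\sum_n c_ne_n$ with all $c_n\neq0$, and set $V_\varphi=\bigl(\sum_n\varphi(n)\overline{c_n}\bigr)v$, $W=U-V$. Then $V$ is deterministic with $\Hs_{(V)}^t=\C v$ for every $t$, $W$ is purely nondeterministic, $V$ and $W$ are uncorrelated and both subordinate to $U$, yet $\Hs_{(V)}\not\subseteq\Hs_{(U)}^{-\infty}=\{0\}$ and $\Hs_{(V)}^t\not\subseteq\Hs_{(U)}^t$. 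So the missing inclusion is not ``routine bookkeeping''; any complete uniqueness proof must either invoke subordination level by level ($\Hs_V^t\subseteq\Hs_U^t$ for all $t$, as in the classical Wold statement) or otherwise establish $\Hs_V=\Hs_U^{-\infty}$, and your proposal supplies neither. For comparison, the paper argues uniqueness differently: it forms the difference $U^{det}_\varphi-U^1_\varphi=U^2_\varphi-U^p_\varphi$ and observes it lies both in $\Hs_U^{-\infty}$ and in its gramian-orthogonal complement, hence vanishes --- but that argument starts from the identification $\Hs_{U^1}=\Hs_{U^1}^{-\infty}=\Hs_U^{-\infty}$, which is exactly the point you could not reach; as written, your uniqueness argument (and with it the decomposition's uniqueness under (i)--(iii) as you interpret them) remains unproved.
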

\begin{proof}
Since in normal Hilbert $\Bc(H)$-modules, any closed submodule has a gramian projection (Lemma 2, Section 2.2, pp.22 \cite{Kaki}), let $P$ be the gramian projection associated to the submodule $\Hs_{U}^{-\infty}$ and let
\begin{equation}\label{427}
U_{\varphi}^{det}=PU_{\varphi}\quad,\quad U_{\varphi}^{p}=(I-P)U_{\varphi}\,,\quad \varphi\in \Dc_d.
\end{equation}
One can verify by using standard arguments (as in \cite[Section 5.1]{Kaki}) that \eqref{427} gives us the very m.s.o.r.d.f. we seek, the uniqueness of decomposition \eqref{426} being obtained by direct reasoning.
\end{proof}

\section{Multivariate second order stochastic measures and their associated bimeasures}

In this Section we shall see how from the theory of m.s.o.r.d.f. we can indeed infer the classical theory of m.s.o. random fields, as well as how to fit in this general theory the special class of m.s.o.r.d.f., namely that of (not necessarily bounded) measures, which is very useful for the spectral representations of random (distribution) fields. \\
Each very well known continuous m.s.o. random field $F$ on $\mathbb{R}^{d}$ (i.e. $F\in\mathcal{E}^{0}_{d}(\Hs)$) will be identified with the m.s.o. random Radon measure $U^{F}$ (i.e. m.s.o. random distribution fields of zero order having as index set $\Dc_d^0$), defined by
\begin{equation}\label{4.1}
U_{\varphi}^{F}=\int\limits_{\mathbb{R}^{d}}\varphi(t) F(t) dt,\quad \, \varphi\in \Dc^0_{d}.
\end{equation}
Moreover, since $F$ is bounded on each bounded Borel subset $A$ of $\R^d$ (i.e. on each $A \in \widetilde\Bc or(\R^d)$), it can be even regarded as a not necessarily bounded regular measure $\xi^{F}$ through
\begin{equation}\label{3.2}
\xi^{F}(A)=\int\limits_{A}F(t)dt,\quad A\in\widetilde{\mathcal{B}}or(\mathbb{R}^{d}).
\end{equation}
In such a way the locally convex $\Bc(H)$-module $\Ec_d^0(\Hs)$ is embedded into the class of \emph{multivariate second order stochastic, not necessarily bounded, regular measures}, for which the index set $\Lambda$ is the $\delta$-ring $\widetilde{\Bc}or(\mathbb{R}^{d})$. Playing an important role in the definitions of various kinds of harmonizability of m.s.o. random distribution fields, they are often supposed to have finite operator semivariation i.e.
\begin{equation}\label{eq:semivar.op}
\|\xi\|_{o}(A)< \infty,\quad A\in \widetilde{\Bc}or(\mathbb{R}^{d}),
\end{equation}
where $\|\cdot\|_{o}$ is defined as in \cite[pp. 56 Def.4 (1)]{Kaki}, or finite semivariation i.e.
\begin{equation}\label{eq:semivar}
\|\xi\|(A)< \infty,\quad A\in \widetilde{\Bc}or (\mathbb{R}^{d}),
\end{equation}
where $\|\cdot\|$ is defined as in \cite[pp.54 Def.1]{Kaki}, the corresponding classes of m.s.o. stochastic regular measures being denoted by $fosvr\mathcal{M}_{d}(\Hs)$, or by $fsvr \mathcal{M}_{d}(\Hs)$, respectively. They are locally convex $\Bc(H)$-modules with the seminorms given by \eqref{eq:semivar.op} and \eqref{eq:semivar} respectively.
However, since $\|\xi\|(A)\leqslant\|\xi\|_{0}(A),\, A\in\widetilde{\Bc}or(\mathbb{R}^{d})$, the inclusion $fosvr\mathcal{M}_{d}(\Hs) \subset \linebreak fsvr \mathcal{M}_{d}(\Hs)$ holds with continuous embedding.

Now an element $\xi\in fsvr\,{\mathcal M}_{d}(\Hs)$ is to be regarded as a $\Hs$-valued distribution $U^{\xi}$, which will be even a $\Hs$~-~valued Radon measure, by putting

\begin{equation}\label{3.3}
U_{\varphi}^{\xi}=\int\limits_{\mathbb{R}^{d}}\varphi(t)d\xi(t),\quad \varphi\in\Dc_{d}^0.
\end{equation}
Thus, we have obtained
\begin{prop}\label{prop:incluziuni}
For the above mentioned classes of m.s.o. random distribution fields the following inclusions hold with continuous embedding
\begin{multline}\label{3.4}
\Ec^{0}_{d}(\Hs)\subset fosvr\,{\mathcal M}_{d}(\Hs)\subset fsvr\,{\mathcal M}_{d}(\Hs)\subset\\
\subset(\Dc_{d}^{0})'(\Hs)\subset(\Dc_{d}^{m})'(\Hs)\subset\Dc'_{d}(\Hs).
\end{multline}
\end{prop}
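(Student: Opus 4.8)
The plan is to verify the chain of inclusions in \eqref{3.4} link by link, checking in each case both that the set-theoretic inclusion holds (after the appropriate identification) and that the inclusion is continuous with respect to the locally convex topologies carried by the two spaces. Most of the identifications have already been spelled out in the text preceding the statement, so the proof is essentially a matter of organizing these remarks and supplying the continuity estimates.

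First I would treat the embedding $\Ec^0_d(\Hs)\subset fosvr\,\Mc_d(\Hs)$. Given a continuous m.s.o.\ random field $F\in\Ec^0_d(\Hs)$ one forms $\xi^F$ by \eqref{3.2}; one must check that $\xi^F$ is a well-defined, countably additive, regular $\Hs$-valued measure on the $\delta$-ring $\widetilde{\Bc}or(\R^d)$ and that it has finite operator semivariation on each bounded Borel set. Finiteness of $\|\xi^F\|_o(A)$ follows because $F$ is bounded on the relatively compact set $A$ (this boundedness is exactly the property invoked in the text), so $\|\xi^F\|_o(A)\le |A|\cdot\sup_{t\in A}\|F(t)\|_{\Hs}$, which also gives the continuity of the embedding when $\Ec^0_d(\Hs)$ carries the topology of uniform convergence on compacta and $fosvr\,\Mc_d(\Hs)$ the seminorms \eqref{eq:semivar.op}. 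The inclusion $fosvr\,\Mc_d(\Hs)\subset fsvr\,\Mc_d(\Hs)$ with continuous embedding is immediate from the pointwise inequality $\|\xi\|(A)\le\|\xi\|_o(A)$ already recorded in the text, since that inequality says precisely that the identity map is continuous for the respective seminorm families.

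Next comes the step $fsvr\,\Mc_d(\Hs)\subset(\Dc^0_d)'(\Hs)$, realized by $\xi\mapsto U^\xi$ via \eqref{3.3}. Here one must show that $\varphi\mapsto\int_{\R^d}\varphi\,d\xi$ is a well-defined continuous linear map $\Dc^0_d\to\Hs$; the integral makes sense for $\xi$ of finite semivariation by the standard theory of integration of bounded Borel functions against vector measures (as developed in the reference \cite{Kaki} cited for the definition of $\|\cdot\|$), and the bound $\|U^\xi_\varphi\|_{\Hs}\le\|\xi\|(\mathrm{supp}\,\varphi)\cdot\sup|\varphi|$ gives continuity of $U^\xi$ on each $\Dc^0_{d,K}$ and hence on $\Dc^0_d$, as well as continuity of the assignment $\xi\mapsto U^\xi$. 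The two remaining inclusions $(\Dc^0_d)'(\Hs)\subset(\Dc^m_d)'(\Hs)\subset\Dc'_d(\Hs)$ are dual to the continuous dense inclusions of test-function spaces $\Dc_d=\Dc^\infty_d\hookrightarrow\Dc^m_d\hookrightarrow\Dc^0_d$: a continuous $\Hs$-valued functional on the larger test space restricts to one on the smaller, and restriction is continuous for the topologies of uniform convergence on bounded sets.

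The main obstacle, and the only place where genuine care is required rather than bookkeeping, is the step $fsvr\,\Mc_d(\Hs)\subset(\Dc^0_d)'(\Hs)$: one needs the integral $\int\varphi\,d\xi$ to exist in $\Hs$ for measures that are only of finite \emph{semivariation} (not necessarily of finite variation, and not necessarily bounded), which forces one to use the module/Hilbert-space structure of $\Hs$ rather than a naive Bochner integral, and to control the integral by the semivariation seminorm uniformly on compact supports. Once that integration-theoretic point is granted — it is exactly the content of the definitions from \cite[pp.\ 54]{Kaki} being invoked — the remaining verifications are the routine estimates indicated above, and stringing the five continuous embeddings together yields \eqref{3.4}.
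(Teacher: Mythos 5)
Your proposal is correct and follows essentially the same route as the paper, which gives no separate argument but simply assembles the identifications \eqref{4.1}, \eqref{3.2}, \eqref{3.3} and the inequality $\|\xi\|(A)\leqslant\|\xi\|_{o}(A)$ stated just before the Proposition. Your only additions are the explicit continuity estimates (the bound by $|A|\sup_{t\in A}\|F(t)\|_{\Hs}$, the bound $\|U^\xi_\varphi\|_{\Hs}\le\|\xi\|(\mathrm{supp}\,\varphi)\sup|\varphi|$, and the duality of the test-function inclusions), which the paper leaves implicit.
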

Let's observe that a m.s.o. stochastic mapping from one of the mentioned classes can have more than one index set. For example, looking to the above identifications:
\[
\xi = \{ \xi(A)\}_{A\in \widetilde\Bs or (\R^d)}\ \text{ with } \ U^\xi=\{ U^\xi_\varphi\}_{\varphi\in\Dc_d^0},
\]
respectively
\[
F = \{ F(t)\}_{t\in\R^d} \ \text{ with }\ \xi^F = \{\xi^F(A)\}_{A\in\widetilde\Bs or(\R^d)} \ \text{ and } \ U^F = U^{\xi^F}
\]
it is not difficult to see that
\[
\Hs_\xi = \Hs_{U^\xi},\ \text{ respectively }\ \Hs_F = \Hs_{\xi^F} = \Hs_{U^F},
\]
analogous relations being true for the corresponding vector domains and measurements spaces.
It is of interest how those classes of positive definite kernels which are co-domains of the ``restrictions'' of the operator covariance distribution mapping \eqref{3.5} to the submodules from \eqref{3.4} can be suitable described.
Let's mention that for the operator covariance distribution of an element $\xi \in fsvr{\mathcal M}_d(\Hs)$ regarded as a m.s.o. random distribution field we naturally use the notation $\Gamma_{U^\xi}$, while if $\xi$ is regarded as a m.s.o. stochastic mapping on
$\widetilde\Bc or (\R^d)$, then its operator covariance function $\Gamma_\xi$ represents a positive definite regular bimeasure on $ \widetilde\Bc or (\R^d) \times \widetilde\Bc or (\R^d)$, for which we shall use also the notation $\tau_\xi$. It will be also called the \emph{operator covariance bimeasure} associated to the m.s.o stochastic measure $\xi$.

For the bimeasures on $ \widetilde\Bc or (\R^d) \times \widetilde\Bc or (\R^d)$ and their semivariation or operator semivariation we adopt analogue definitions as in \cite{Kaki} (Definition 9 (1) and (3) pp.62 and Definition 16 pp.65). For the spaces of such ($\Cs_{1}(H)$-valued) regular bimeasures with finite (operator) semivariation we shall use the notation $fsvr \, \mathfrak{M}_{2d}\big(\Cs_{1}(H)\big)$ ($fosvr \,\mathfrak{M}_{2d}\big(\Cs_{1}(H)\big)$, respectively).\\
However, since in our case, the measure and the bimeasure are defined on a $\delta$-ring, some properties from \cite{Kaki} do not automatically hold.

The corresponding classes of positive definite bimeasures will be denoted by $fsvr \, \mathfrak{M}_{2d}^{pd}\big(\Cs_{1}(H)\big)$ and $fosvr \, \mathfrak{M}_{2d}^{pd}\big(\Cs_{1}(H)\big)$. So in the above notation $\tau_\xi \in f(o)svr{\mathfrak M}_{2d}^{pd}(\Cs_1(H))$.\\
Let's also mention that it is not hard to see that a Morse-Transue (MT)\emph{strict integral} as in \cite[Section 1.2 pp.5]{Kaki} can be defined for bimeasures $\tau$ on $\widetilde\Bc or (\R^d) \times \widetilde \Bc or (\R^d)$. With such a strict MT-integral to each $\tau \in fsvr \Mg_{2d}(\Cs_1(H))$ we can attach a distribution $C^{\tau}$ on $\R^{2d}$ (a distribution kernel on $\R^d$, in the sense of L. Schwartz), first defined on elementary tensors trough

\begin{equation}\label{eq:covdistrib}
C^{\tau}(\varphi\otimes\psi)=\int\limits_{\mathbb{R}^{d}}\int\limits_{\mathbb{R}^{d}}
\varphi(s)\psi(t)d\tau(s,t),\quad \varphi,\psi\in \Dc_{d}^{m},
\end{equation}
and then, by the usual extension, to the whole $\Dc^{m}_{2d}$.

On the other hand, each $K\in \Ec^{0}_{2d}\big(\Cs_{1}(H)\big)$ can be regarded as a regular bimeasure $\tau^{K}$, having finite operator semivariation, by putting
\begin{equation}
\tau^{K}(A,B):=\int\limits_{A}\int\limits_{B} K(s,t)ds dt,\quad A, B\in\widetilde{\Bc}or(\mathbb{R}^{d}).
\end{equation}

This infers
\begin{prop}\label{prop:incluz.poz.defn}
The valued domains of the restrictions of the operator covariance distribution mapping \eqref{3.5} to the spaces from \eqref{3.4}
satisfy respectively the following inclusions
\begin{multline}\label{eq:incluz.pos.def}
pd\Ec^{0}_{2d}\big(\Cs_{1}(H)\big)\subset fosvr\,\mathfrak{M}_{2d}^{pd}\big(\Cs_{1}(H)\big)\subset fsvr\,\mathfrak{M}_{2d}^{pd}\big(\Cs_{1}(H)\big)\subset\\\subset pd(\Dc^{0}_{2d})'\big(\Cs_{1}(H)\big)
\subset pd(\Dc^{m}_{2d})'\big(\Cs_{1}(H)\big)\subset pd(\Dc_{2d})'\big(\Cs_{1}(H)\big).
\end{multline}
\end{prop}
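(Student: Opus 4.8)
The plan is to read \eqref{eq:incluz.pos.def} off one inclusion at a time from the corresponding inclusion in \eqref{3.4}, using the representations of covariance objects assembled just above. The guiding principle is that the narrower the class in \eqref{3.4} to which a m.s.o.\ stochastic mapping belongs, the more regular the positive definite object that represents its operator covariance distribution $C_{\Phi}=C_{\Gamma_{\Phi}}$: a continuous $\Cs_{1}(H)$-valued kernel on $\R^{d}\times\R^{d}$, then a positive definite regular $\Cs_{1}(H)$-valued bimeasure on $\widetilde\Bc or(\R^{d})\times\widetilde\Bc or(\R^{d})$ of finite operator semivariation, then one of finite semivariation, and finally an order-$0$, resp.\ order-$m$, positive definite distribution kernel on $\R^{d}$ --- precisely the classes on the left of \eqref{eq:incluz.pos.def}. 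I would first note, using the observations preceding the proposition ($\Hs_{\xi}=\Hs_{U^{\xi}}$, $\Hs_{F}=\Hs_{\xi^{F}}=\Hs_{U^{F}}$, and likewise for vector domains and measurements spaces), that the covariance object attached to an element of one of the classes in \eqref{3.4} does not depend on which of its admissible index sets is used, so the ``valued domains'' in the statement are well defined subsets of $pd(\Dc_{2d})'(\Cs_{1}(H))$, and that, by \eqref{3.4} itself, each is contained in the next; it then remains to place each inside the correspondingly named class and to check that the named classes are themselves nested as displayed.

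For the individual steps I would argue as follows. \emph{(i)} For $F\in\Ec^{0}_{d}(\Hs)$ the operator covariance function $\Gamma_{F}(s,t)=[F(s),F(t)]_{\Hs}$ is a continuous positive definite $\Cs_{1}(H)$-valued kernel, i.e.\ lies in $pd\,\Ec^{0}_{2d}(\Cs_{1}(H))$, and under the identification $F\leftrightarrow U^{F}$ one has $C_{U^{F}}(\varphi\otimes\psi)=\Gamma_{U^{F}}(\varphi,\bar\psi)=\iint\varphi(s)\psi(t)\,\Gamma_{F}(s,t)\,ds\,dt$, so the valued domain over $\Ec^{0}_{d}(\Hs)$ sits inside $pd\,\Ec^{0}_{2d}(\Cs_{1}(H))$. \emph{(ii)} For $K\in pd\,\Ec^{0}_{2d}(\Cs_{1}(H))$ the bimeasure $\tau^{K}(A,B)=\iint_{A\times B}K(s,t)\,ds\,dt$ is regular with finite operator semivariation, and approximating $\sum_{i,j}a_{i}\tau^{K}(A_{i},A_{j})a_{j}^{\ast}=\iint\bigl(\sum_{i}\chi_{A_{i}}(s)a_{i}\bigr)K(s,t)\bigl(\sum_{j}\chi_{A_{j}}(t)a_{j}\bigr)^{\ast}\,ds\,dt$ by Riemann sums and invoking the positive definiteness \eqref{1327} of $K$ shows $\tau^{K}\in fosvr\,\Mg^{pd}_{2d}(\Cs_{1}(H))$; combined with the fact that for $\xi\in fosvr\,\mathcal{M}_{d}(\Hs)$ the covariance bimeasure $\tau_{\xi}=\Gamma_{\xi}$ is positive definite and of finite operator semivariation, this gives the first two inclusions in \eqref{eq:incluz.pos.def} and the placement of the valued domain over $fosvr\,\mathcal{M}_{d}(\Hs)$. \emph{(iii)} The inclusion $fosvr\,\Mg^{pd}_{2d}\subset fsvr\,\Mg^{pd}_{2d}$ is immediate from the bimeasure inequality $\|\tau\|(A\times B)\le\|\tau\|_{o}(A\times B)$, the analogue of the one already used for measures. \emph{(iv)} For $\tau\in fsvr\,\Mg_{2d}(\Cs_{1}(H))$ the strict MT-integral yields via \eqref{eq:covdistrib} an order-$0$ $\Cs_{1}(H)$-valued distribution kernel $C^{\tau}$ on $\R^{d}$, and positivity of $\tau$ passes to $C^{\tau}$ by Lemma \ref{lem:trei.unu} and Corollary \ref{cor3.1}, whence $fsvr\,\Mg^{pd}_{2d}\subset pd(\Dc^{0}_{2d})'(\Cs_{1}(H))$ and the placement of the valued domains over $fsvr\,\mathcal{M}_{d}(\Hs)$ and $(\Dc^{0}_{d})'(\Hs)$. \emph{(v)} Finally, the continuous dense inclusions $\Dc_{2d}=\Dc^{\infty}_{2d}\hookrightarrow\Dc^{m}_{2d}\hookrightarrow\Dc^{0}_{2d}$ give $pd(\Dc^{0}_{2d})'\subset pd(\Dc^{m}_{2d})'\subset pd(\Dc_{2d})'$ by restriction of functionals (a Radon measure is a fortiori of order $m$ and of order $\infty$, and the positivity condition --- a condition on the finite sums $\sum a_{i}C(\varphi_{i}\otimes\overline{\varphi_{j}})a_{j}^{\ast}$ --- is unaffected), covering the last two inclusions and the placement of the valued domains over $(\Dc^{m}_{d})'(\Hs)$ and $\Dc'_{d}(\Hs)$.

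The part I expect to be the real obstacle is \emph{not} the chaining above but the two ``transfer'' facts it rests on. First, in step \emph{(ii)} (and, for $fsvr$, in step \emph{(iv)}) one must prove that finiteness of the operator semivariation $\|\xi\|_{o}$ of \eqref{eq:semivar.op} --- respectively of $\|\xi\|$ of \eqref{eq:semivar} --- genuinely transfers to finiteness of $\|\tau_{\xi}\|_{o}$, respectively $\|\tau_{\xi}\|$, of the associated covariance bimeasure; this requires a Cauchy--Schwarz / polarization estimate in the normal Hilbert $\Bc(H)$-module $\Hs_{\xi}$, bounding $\|\tau_{\xi}\|_{o}(A\times B)$ by a constant multiple of $\|\xi\|_{o}(A)\,\|\xi\|_{o}(B)$. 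Second, as already signalled in the text, since our measures and bimeasures are defined on the $\delta$-ring $\widetilde\Bc or(\R^{d})$ rather than on a $\sigma$-algebra, some properties borrowed from \cite{Kaki} are not automatic, so the very definability of the strict MT-integral on $\widetilde\Bc or(\R^{d})\times\widetilde\Bc or(\R^{d})$ and the order-$0$ character of $C^{\tau}$ have to be established directly in this setting. Granting these, \eqref{eq:incluz.pos.def} follows by assembling \emph{(i)}--\emph{(v)}.
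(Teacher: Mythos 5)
Your proposal is correct and follows essentially the same route as the paper, which deduces \eqref{eq:incluz.pos.def} directly from the preceding identifications $K\mapsto\tau^{K}$, the inequality between semivariation and operator semivariation, the strict MT-integral assignment $\tau\mapsto C^{\tau}$, and the nesting of the test-function spaces $\Dc_{2d}\subset\Dc^{m}_{2d}\subset\Dc^{0}_{2d}$. The ``transfer'' issue you flag (finite (operator) semivariation and positive definiteness passing from $\xi$ to $\tau_{\xi}=\xi\otimes\xi$) is handled in the paper, just after the proposition, by citing Lemma 19(1) and (3) of \cite[pp.~66]{Kaki} rather than by your direct Cauchy--Schwarz estimate, but this is the same idea.
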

For a more complete image we discuss in detail the restrictions of the operator covariance distribution mapping \eqref{3.5} to the space $fsvr\,{\mathcal M}_{d}(\Hs)$, respectively to $fosvr\,{\mathcal M}_{d}(\Hs)$ and more particular to $\Ec^{0}_{d}(\Hs)$. Since
$$\tau_{\xi}(A,B)=(\xi\otimes\xi)(A,B)=[\xi(A),\xi(B)]_{\Hs},\quad A, B\in\widetilde{\Bc}or(\mathbb{R}^{d})$$
we can apply Lemma 19(1) and (3) of \cite[pp. 66]{Kaki}, from where we deduce that $\tau_{\xi}\in fsvr\,\mathfrak{M}_{2d}^{pd}\big(\Cs_{1}(H)\big) $, respectively $\tau_{\xi}\in fosvr\,\mathfrak{M}_{2d}^{pd}\big(\Cs_{1}(H)\big)$.

It is not difficult to see that it results

\begin{prop}\label{prop2.1}
Statements analogous as for the operator covariance distribution mapping \eqref{3.5} in the corollaries above, hold for the corresponding mappings
\begin{equation}\label{311}
\begin{split}
fosvr\,{\mathcal M}_{d}(\Hs)\ni \xi&\mapsto\tau_{\xi}\in fosvr\,\mathfrak{M}_{2d}^{pd}\big(\Cs_{1}(H)\big)\\
fsvr\,{\mathcal M}_{d}(\Hs)\ni \xi&\mapsto\tau_{\xi}\in fsvr\,\mathfrak{M}_{2d}^{pd}\big(\Cs_{1}(H)\big).
\end{split}
\end{equation}

Moreover, these operator covariance bimeasure mappings are natural extensions of the covariance function mapping
\begin{equation}\label{313}
\Ec^{0}_{d}(\Hs)\ni F \mapsto \Gamma_{F}\in pd \Ec^{0}_{2d}\big(\Cs_{1}(H)\big),
\end{equation}
associated to classical m.s.o. random fields, as was defined in \cite[Section 4.1, pp. 148]{Kaki}.
\end{prop}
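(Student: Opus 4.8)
The plan is to reduce every asserted ``analogue'' to the corresponding corollary of Section~4 through the identification of a stochastic measure with the random Radon measure it induces, the only genuinely new point being a Kolmogorov-type construction carried out on the $\delta$-ring directly.

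The key link is the identity $C^{\tau_{\xi,\eta}}=C_{U^{\xi},U^{\eta}}$ on $\Dc^{0}_{2d}$, valid for $\xi,\eta\in fsvr\,{\mathcal M}_{d}(\Hs)$ with induced Radon measures $U^{\xi},U^{\eta}\in(\Dc^{0}_{d})'(\Hs)$ from \eqref{3.3}. Evaluating \eqref{eq:covdistrib} on an elementary tensor and expanding the strict Morse--Transue integral over simple functions gives, for $\varphi,\psi\in\Dc^{0}_{d}$,
\[
C^{\tau_{\xi,\eta}}(\varphi\otimes\psi)=\int_{\R^{d}}\!\!\int_{\R^{d}}\varphi(s)\psi(t)\,d\tau_{\xi,\eta}(s,t)=\Bigl[\int_{\R^{d}}\varphi\,d\xi,\ \int_{\R^{d}}\overline{\psi}\,d\eta\Bigr]_{\Hs}=\bigl[U^{\xi}_{\varphi},U^{\eta}_{\overline{\psi}}\bigr]_{\Hs}=C_{U^{\xi},U^{\eta}}(\varphi\otimes\psi),
\]
so by continuity $C^{\tau_{\xi,\eta}}=C_{U^{\xi},U^{\eta}}$, and in particular $C^{\tau_{\xi}}=C_{U^{\xi}}$, which is positive definite. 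Combined with the domain identifications $\Hs_{\xi}=\Hs_{U^{\xi}}$, $\Hs_{(\xi)}=\Hs_{(U^{\xi})}$, $G_{\xi}=G_{U^{\xi}}$ already recorded, Corollary~\ref{cor3.2}, the subordination corollary and Corollary~\ref{cor3.3} applied to $U^{\xi}$ (and $U^{\eta}$) at once yield their measure/bimeasure counterparts: $\Hs_{\xi}$, $G_{\xi}$ are isomorphic to $\Hs_{\Gamma^{C^{\tau_{\xi}}}}$, $G_{\Gamma^{C^{\tau_{\xi}}}}$ and $\Hs_{(\xi)}$ is reproduced by $\gamma^{C^{\tau_{\xi}}}$; $\xi$ is subordinate to $\eta$ iff $\psi\mapsto C^{\tau_{\xi,\eta}}(\varphi\otimes\overline{\psi})$ lies in $\Hs_{\Gamma^{C^{\tau_{\eta}}}}$ with the stated gramian identity; and $\tau_{\xi^{1}}=\tau_{\xi^{2}}$ forces $\xi^{1},\xi^{2}$ to be gramian unitarily equivalent (here one uses that a regular bimeasure is recovered from its image under $\tau\mapsto C^{\tau}$, the latter being determined on rectangles via the MT-integral and regularity).

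For the surjectivity statement --- the analogue of Corollary~\ref{cor3.1}, that every $\tau\in fsvr\,\mathfrak{M}^{pd}_{2d}(\Cs_{1}(H))$ (resp.\ in $fosvr$) is $\tau_{\xi}$ for some $\xi$ in the corresponding measure class --- I would imitate the proof of Theorem~\ref{3afirmASM}(i). Regard $\tau$ as a positive definite $\Cs_{1}(H)$-valued kernel on the $\delta$-ring $\widetilde{\Bc}or(\R^{d})$, let $\Hs_{\tau}$ ($G_{\tau}$) be the normal Hilbert $\Bc(H)$-module (Hilbert space) it reproduces as in \cite{Kaki}, and set $\xi(A):=\tau(A,\cdot)\in\Hs_{\tau}$, so that $[\xi(A),\xi(B)]_{\Hs_{\tau}}=\tau(A,B)$. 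That $A\mapsto\xi(A)$ is countably additive in $\Hs_{\tau}$ follows from the separate countable additivity of $\tau$ (which makes $A\mapsto[\xi(A),\tau(B,\cdot)]_{\Hs_{\tau}}=\tau(A,B)$ countably additive for $B$ in a dense generating set) together with the uniform bound $\sup_{A\subset\Omega}\|\xi(A)\|^{2}_{\Hs_{\tau}}=\sup_{A\subset\Omega}\|\tau(A,A)\|_{\Cs_{1}(H)}\le\|\tau\|_{o}(\Omega\times\Omega)<\infty$ on each bounded $\Omega$, via an Orlicz--Pettis / weak-to-norm argument; the elementary estimate $\|\xi\|(A)^{2}\le\|\tau\|(A\times A)<\infty$ (resp.\ $\|\xi\|_{o}(A)^{2}\le\|\tau\|_{o}(A\times A)<\infty$), obtained by restricting partitions and contractions to the diagonal, then places $\xi$ in $fsvr\,{\mathcal M}_{d}(\Hs)$ (resp.\ $fosvr\,{\mathcal M}_{d}(\Hs)$). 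Embedding $G_{\tau}$ into a space $L^{2}_{0}(\wp)$, hence $\Hs_{\tau}$ into $\Cs_{2}(L^{2}_{0}(\wp),H)$ as in Theorem~\ref{3afirmASM}(i), realizes $\xi$ as a genuine m.s.o. stochastic measure with $\tau_{\xi}=\tau$; uniqueness up to gramian unitary equivalence is the analogue of Corollary~\ref{cor3.3} already obtained.

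Finally, the ``natural extension'' assertion is the commutativity of the square built from \eqref{313}, the first mapping in \eqref{311} and the embeddings $F\mapsto\xi^{F}$ of \eqref{3.2} and $K\mapsto\tau^{K}$; for $F\in\Ec^{0}_{d}(\Hs)$,
\[
\tau_{\xi^{F}}(A,B)=\bigl[\xi^{F}(A),\xi^{F}(B)\bigr]_{\Hs}=\Bigl[\int_{A}F(s)\,ds,\ \int_{B}F(t)\,dt\Bigr]_{\Hs}=\int_{A}\!\!\int_{B}[F(s),F(t)]_{\Hs}\,ds\,dt=\int_{A}\!\!\int_{B}\Gamma_{F}(s,t)\,ds\,dt=\tau^{\Gamma_{F}}(A,B),
\]
i.e.\ $\tau_{\xi^{F}}=\tau^{\Gamma_{F}}$, which is exactly the compatibility with \eqref{313}. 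The only place where real work is hidden is the surjectivity step: since $\tau$ and $\xi$ live on a $\delta$-ring and not on a $\sigma$-algebra, the countable additivity of the Kolmogorov section $A\mapsto\tau(A,\cdot)$ and the precise bookkeeping of (operator) semivariation must be verified by hand --- this is precisely the situation in which ``some properties from \cite{Kaki} do not automatically hold'' and a $\delta$-ring adaptation of Lemma~19 of \cite{Kaki} takes the place of a direct citation.
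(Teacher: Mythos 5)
Your argument is sound at (indeed above) the level of rigor the paper itself employs, but it takes a genuinely different route. The paper treats $\xi$ directly as a m.s.o. stochastic mapping indexed by $\Lambda=\widetilde{\Bc}or(\R^d)$: since $\tau_\xi(A,B)=[\xi(A),\xi(B)]_{\Hs}$ is exactly the operator covariance function $\Gamma_\xi$ for this index set, the analogues of the Section 4 corollaries are read off from Theorems \ref{3afirmcorel}, \ref{thm:subordination} and \ref{3afirmASM} applied with this $\Lambda$, the only measure-specific input being Lemma 19(1),(3) of \cite{Kaki} (cited, not reproved) to place $\tau_\xi$ in $fsvr$, respectively $fosvr\,\Mg_{2d}^{pd}(\Cs_1(H))$; coherence with \eqref{313} is then recorded separately as \eqref{314}. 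You instead route everything through $U^\xi$ via $C^{\tau_{\xi}}=C_{U^{\xi}}$ --- which is precisely the first identity of \eqref{314}, so your ``key link'' proves the subsequent proposition --- and you verify by hand the $\delta$-ring facts (Orlicz--Pettis countable additivity of $A\mapsto\tau(A,\cdot)$, the semivariation estimates) that the paper hides behind ``it is not difficult to see''. What your route buys is exactly these explicit verifications; what it costs is a step you leave out: in the uniqueness analogue, Corollary \ref{cor3.3} gives a gramian unitary $W$ intertwining $U^{\xi^1}$ and $U^{\xi^2}$, and to conclude $W\xi^1(A)=\xi^2(A)$ one must still approximate $1_A$ by test functions inside the modular domain, whereas applying Theorem \ref{3afirmASM}(ii) directly with $\Lambda=\widetilde{\Bc}or(\R^d)$ (the paper's way) gives this at once; similarly the subordination analogue is more naturally phrased with $\tau_{\xi,\eta}$ on the $\delta$-ring than with $\Gamma^{C^{\tau_\eta}}$ on $\Dc^0_d$. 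A last bookkeeping point: in your surjectivity step the module norm $\|[\cdot,\cdot]\|^{1/2}$ and the Hilbert-space norm $(\mathrm{tr}[\cdot,\cdot])^{1/2}$ on $\Hs_\tau$ are not equivalent, so you should fix the topology in which countable additivity of $\xi$ is meant before invoking Orlicz--Pettis.
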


Now it is naturally to ask how the mappings (\ref{313}),(\ref{311}) can be regarded as restrictions of the covariance distribution mapping \eqref{3.5} to the first three subspaces from \eqref{3.4}, i.e our generalization is coherent to the classical case in \cite{Kaki}.\\
We shall show that the operator covariance distributions of \eqref{311} and of \eqref{313} will be regarded as $\Cs_1(H)$-valued distribution $C^{\tau_\xi}$ on $\R^{2d}$ corresponding to (generated by) the bimeasure $\tau_\xi$, respectively $C^{\Gamma_F}$ corresponding to (generated by) the correlation function $\Gamma_F$. More precisely it holds
\begin{prop}
The positive definite operator valued bimeasures from \eqref{311} and the operator covariance function of \eqref{313} satisfy
\begin{equation}\label{314}
C_{U^{\xi}}=C^{\tau_{\xi}} \ \text{ and } \
C_{U^{F}}=C^{\Gamma_{F}},
\end{equation}
respectively.
\end{prop}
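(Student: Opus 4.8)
The plan is to verify each of the two identities in \eqref{314} by testing both sides against elementary tensors $\varphi\otimes\psi$ with $\varphi,\psi\in\Dc_d^0$ (resp. $\Dc_d^m$), since by Lemma~\ref{lem:trei.unu} and the extension procedure in its proof a distribution kernel on $\R^d$ is uniquely determined by its values on such tensors. So it suffices to show
\[
C_{U^\xi}(\varphi\otimes\psi) = C^{\tau_\xi}(\varphi\otimes\psi)\quad\text{and}\quad C_{U^F}(\varphi\otimes\psi) = C^{\Gamma_F}(\varphi\otimes\psi),\qquad \varphi,\psi\in\Dc_d^0.
\]

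First I would unwind the left-hand side of the first identity. By the definition of the operator covariance distribution of a m.s.o.r.d.f. (the construction via Lemma~\ref{lem:trei.unu} applied to $U=U^\xi$), we have $C_{U^\xi}(\varphi\otimes\psi)=\Gamma_{U^\xi}(\varphi,\bar\psi)=[U^\xi_\varphi, U^\xi_{\bar\psi}]_{\Hs}$. Now substitute the defining formula \eqref{3.3}, $U^\xi_\varphi=\int_{\R^d}\varphi(t)\,d\xi(t)$, and bring the Gramian inside the integrals: since $[\,\cdot\,,\,\cdot\,]_\Hs$ is $\Cs_1(H)$-valued and continuous and $\xi$ has finite semivariation, one gets
\[
[U^\xi_\varphi, U^\xi_{\bar\psi}]_\Hs = \int_{\R^d}\!\int_{\R^d} \varphi(s)\,\overline{\overline{\psi(t)}}\,d[\xi(s),\xi(t)]_\Hs = \int_{\R^d}\!\int_{\R^d}\varphi(s)\psi(t)\,d\tau_\xi(s,t),
\]
using $\tau_\xi(A,B)=[\xi(A),\xi(B)]_\Hs$ and the fact that $\overline{\psi(t)}$ is conjugated once more by the sesquilinearity convention so the net weight is $\varphi(s)\psi(t)$. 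The right-hand side of that display is exactly $C^{\tau_\xi}(\varphi\otimes\psi)$ by \eqref{eq:covdistrib}, giving the first identity. For the second identity I would either repeat this computation with $\xi=\xi^F$, using $\xi^F(A)=\int_A F(t)\,dt$ from \eqref{3.2} (so that $\tau_{\xi^F}(A,B)=\int_A\int_B[F(s),F(t)]_\Hs\,ds\,dt=\tau^{\Gamma_F}(A,B)$ and hence $C^{\tau_{\xi^F}}=C^{\Gamma_F}$), together with the identification $U^F=U^{\xi^F}$ noted in the excerpt; or invoke the first identity directly at $\xi=\xi^F$.

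The main obstacle is the interchange of the Gramian-valued inner product with the (strict Morse--Transue) integrals defining $U^\xi_\varphi$ — i.e. justifying $[\int\varphi\,d\xi,\int\psi\,d\xi]_\Hs=\iint\varphi(s)\psi(t)\,d\tau_\xi(s,t)$. This is the place where finite semivariation of $\xi$ is essential, and it is precisely the content of Lemma~19 of \cite[pp.~66]{Kaki}, already cited in the preceding paragraphs to conclude $\tau_\xi\in fsvr\,\mathfrak{M}_{2d}^{pd}(\Cs_1(H))$; I would appeal to it (and to the continuity of the strict MT-integral in each argument) rather than redo the estimate. Everything else — the density/continuity extension from elementary tensors to all of $\Dc_{2d}^m$, and the bookkeeping of complex conjugates coming from the sesquilinear-versus-bilinear conventions in \eqref{1322} versus \eqref{eq:covdistrib} — is routine once that interchange is in hand.
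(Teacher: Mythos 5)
Your proposal is correct and follows essentially the same route as the paper: test both sides on elementary tensors, unwind $C_{U^\xi}$ via the Gramian of $U^\xi_\varphi=\int\varphi\,d\xi$, interchange the Gramian with the integrals to land on $\iint\varphi(s)\overline{\psi(t)}\,d\tau_\xi(s,t)$ (the paper states this interchange directly via $\tau_\xi=\xi\otimes\xi$, while you explicitly invoke Kakihara's Lemma~19 and the strict MT-integral, which is a fair justification), and then specialize to $\xi=\xi^F$ with $U^{\xi^F}=U^F$ for the second identity. Only a cosmetic slip: in your "obstacle" display the weight should carry $\overline{\psi(t)}$ (or the second argument should be $U^\xi_{\bar\psi}$, as in your main computation), but this does not affect the argument.
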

\begin{proof}
Let  $\xi\in fosvr\,{\mathcal M}_{d}(\Hs)$, and $U^{\xi}\in(\Dc_{d}^{m})'(\Hs)$ given by \eqref{3.3}. Then we successively obtain for each $\varphi,\psi \in \Dc_d^m(\Hs)$

\begin{eqnarray*}
C_{U^{\xi}}(\varphi\otimes\overline{\psi})&=&\big[U^{\xi}\varphi,U^{\xi}\psi\big]_{\Hs}=\left[\int\limits_{\mathbb{R}^{d}}\varphi(s)d\xi(s),
\int\limits_{\mathbb{R}^{d}}\psi(t)d\xi(t)\right]_{\Hs}=\nonumber\\
&=&\int\limits_{\mathbb{R}^{d}}\int\limits_{\mathbb{R}^{d}}\varphi(s)\overline{\psi(t)} d\big(\xi\otimes\xi\big)(s,t)=\nonumber\\ &=&\int\limits_{\mathbb{R}^{d}}\int\limits_{\mathbb{R}^{d}}\varphi(s)\overline{\psi(t)} d\tau_{\xi}(s,t),
\end{eqnarray*}
which by \eqref{eq:covdistrib} gives the first relation in \eqref{314}.

In particular for $\xi=\xi^{F}$ with $\xi^{F}$ given by \eqref{3.2}, we have first $U^{\xi}=U^{F}$, given by \eqref{3.3} and secondly, for each $\varphi,\,\psi\in \Dc_{d}^{m}$

\begin{eqnarray*}
C^{\tau_{\xi^{F}}}(\varphi\otimes\overline{\psi})&=&\int\limits_{\mathbb{R}^{d}}\int\limits_{\mathbb{R}^{d}}\varphi(s)\overline{\psi(t)}\Gamma_{F}(s,t) ds dt \nonumber\\
&=& C^{\Gamma_{F}} (\varphi\otimes\overline{\psi}),
\end{eqnarray*}
which finally means the second relation from \eqref{314}.
\end{proof}

Finally let's observe that for the particular classes of stochastic mappings considered in this Section a Wold type decomposition holds, which is closely connected to the Wold decomposition given at the end of the previous Section.

For $F\in \Ec_d^0 (\Hs)$ the \emph{observable structure} $\Hs_F^{t_0}$ (respectively $\Hs_{(F)}^{t_0}$) is the closed $\Bc(H)$-module (space) in $\Hs$ generated by the set $\{ F(t),\ t\le t_o \}$, while for $\xi \in fosvr {\mathcal M}_d(\Hs)$, $\Hs_\xi^{t_0}$, ($\Hs_{(\xi)}^{t_0}$) is the closed $\Bc(H)$-module (space) in $\Hs$ generated by the set $\{ \xi(A),\ A\in\widetilde\Bs or (\R^d)\ :\ t \in A \Rightarrow t\le t_0 \}$.

Using the observable structures just defined one can easily infer corresponding Wold decompositions for $F$ and $\xi$ as in Theorem \ref{thm:WoldRDF}:
\begin{equation}\label{eq:Wold.F}
F = F^{det} + F^p
\end{equation}
respectively
\begin{equation} \label{eq:Wold.xi}
\xi = \xi^{det} + \xi^p .
\end{equation}

Since, as it is not hard to see, the above definitions of the observable structures are coherent to the classical ones, i.e.
\begin{equation*}
\Hs_F^{t_0} = \Hs^{t_0}_{U^F} (\Hs_{(F)}^{t_0} = \Hs_{(U^F)}^{t_0})\  \text{ and }\
\Hs_\xi^{t_0} = \Hs_{U^\xi}^{t_0} (\Hs_{(\xi)}^{t_0} = \Hs_{(U^\xi)}^{t_0}),
\end{equation*}
we obtain the following coherence result for the three types of Wold decomposition (Theorem \ref{thm:WoldRDF}, \eqref{eq:Wold.F} and \eqref{eq:Wold.xi}).

\begin{cor}
The m.s.o.r.d.f.'s $F, \ \xi^F,\ U^F$ as well as $\xi,\ U^\xi$ are simultaneously (operator) deterministic, (operator) nondeterministic or (operator) purely nondeterministic. Also the corresponding summands in the Wold decomposition are the same, i.e.
\[
(U^F)^{det} = U^{F^{det}},\quad (U^F)^p = U^{F^p},
\]
as well as
\[
(U^\xi)^{det} = U^{\xi^{det}} \quad (U^\xi)^p = U^{\xi^p}.
\]
\end{cor}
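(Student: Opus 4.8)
The plan is to establish the identifications of the observable structures first, and then read off everything else as a formal consequence, since once the observable modules and spaces coincide, determinism, nondeterminism, pure nondeterminism, the remote past, and the Wold summands are all defined purely in terms of these structures and of the gramian projection onto the remote past. So the first step is to verify the coherence relations $\Hs_F^{t_0} = \Hs^{t_0}_{U^F}$, $\Hs_{(F)}^{t_0} = \Hs_{(U^F)}^{t_0}$, and the analogous ones for $\xi^F$ and $\xi$, each for every $t_0\in\R^d$. For the equality $\Hs_F^{t_0}=\Hs_{U^F}^{t_0}$ one inclusion is immediate: if $\varphi\in\Dc_d^{0}$ has $\mathrm{supp}\,\varphi\subset\{t\le t_0\}$, then $U^F_\varphi=\int\varphi(t)F(t)\,dt$ lies in the closed module generated by $\{F(t):t\le t_0\}$, since the integral is a module-norm limit of Riemann sums $\sum_j \varphi(t_j)F(t_j)\Delta_j$ with all $t_j$ in the support, hence in $\{t\le t_0\}$; so $\Hs_{U^F}^{t_0}\subset\Hs_F^{t_0}$. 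For the reverse inclusion one approximates each $F(t)$ with $t\le t_0$ by $U^F_{\varphi_n}$ for a suitable approximate-identity sequence $\varphi_n\in\Dc_d^{t_0-\text{(something)}}$; here one uses continuity of $F$ as an $\Hs$-valued (equivalently, $L^2_{s,0}$-valued) map so that $U^F_{\varphi_n}\to F(t)$ in the module norm, and one must take care to keep the supports of the $\varphi_n$ inside $\{t\le t_0\}$ — this forces a one-sided mollifier, supported in the negative orthant, which is why the ordering $s\le t$ and the half-space $\{t\le t_0\}$ were set up as they were. The case of $\xi^F$ is handled the same way through \eqref{3.2}, and $\Hs_\xi^{t_0}=\Hs_{U^\xi}^{t_0}$ through \eqref{3.3}, the Morse–Transue/strict integral again being a module-norm limit of sums $\sum_j\varphi(t_j)\xi(A_j)$ with all $A_j$ contained in the relevant half-space.

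Once these $t_0$-wise equalities are in hand, the second step is purely formal: taking closed module (resp. space) generated by the union over all $t\in\R^d$ recovers $\Hs_{U^F}=\Hs_F$, $\Hs_{(U^F)}=\Hs_{(F)}$ (these were already noted in the excerpt), and taking intersections over all $t$ gives $\Hs_F^{-\infty}=\Hs_{U^F}^{-\infty}$ and likewise for the vector domains, and for $\xi^F$ and $\xi$. Determinism of $F$ means $\Hs_F=\Hs_F^{-\infty}$, which by the identifications is exactly $\Hs_{U^F}=\Hs_{U^F}^{-\infty}$, i.e.\ determinism of $U^F$; the same equivalence holds verbatim for nondeterminism ($\Hs_F^{-\infty}\subsetneq\Hs_F$) and pure nondeterminism ($\Hs_F^{-\infty}=\{0\}$), and for $\xi^F$ and $\xi$. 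Since $\xi^F=\xi^{\xi^F}$ in the obvious sense and $U^F=U^{\xi^F}$, chaining the identifications also yields $\Hs_{\xi^F}^{t_0}=\Hs_F^{t_0}=\Hs_{U^F}^{t_0}$, so all three objects $F,\xi^F,U^F$ are simultaneously (operator) deterministic / nondeterministic / purely nondeterministic, and similarly $\xi,U^\xi$.

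The third step is the identification of the Wold summands. By Theorem~\ref{thm:WoldRDF}, $U^F_\varphi=PU^F_\varphi+(I-P)U^F_\varphi$ where $P$ is the gramian projection onto $\Hs_{U^F}^{-\infty}$, and by construction $(U^F)^{det}=PU^F$, $(U^F)^p=(I-P)U^F$. Applying the decomposition \eqref{eq:Wold.F} to $F$ itself uses the gramian projection $P'$ onto $\Hs_F^{-\infty}$; but $\Hs_F^{-\infty}=\Hs_{U^F}^{-\infty}$ inside the same ambient module $\Hs$, so $P'=P$ (the gramian projection onto a given closed submodule is unique, by Lemma~2, Sec.~2.2 of \cite{Kaki}). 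Hence for $\varphi\in\Dc_d^{0}$,
\[
U^{F^{det}}_\varphi=\int\varphi(t)F^{det}(t)\,dt=\int\varphi(t)(P'F)(t)\,dt=P\!\int\varphi(t)F(t)\,dt=P\,U^F_\varphi=(U^F)^{det}_\varphi,
\]
where the interchange of $P$ with the integral is justified because $P$ is a bounded module map and the integral is a module-norm limit of sums; thus $(U^F)^{det}=U^{F^{det}}$, and subtracting from $U^F=U^{F}$ gives $(U^F)^p=U^{F^p}$. The identical argument with $\xi$ in place of $F$, integrating against $d\xi^{det}=d(P'\xi)$ and using $\Hs_\xi^{-\infty}=\Hs_{U^\xi}^{-\infty}$, yields $(U^\xi)^{det}=U^{\xi^{det}}$ and $(U^\xi)^p=U^{\xi^p}$. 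The main obstacle is really the first step — carefully producing the one-sided approximate identities supported in $\{t\le t_0\}$ and controlling the supports of the mollified functions so that the approximation stays inside the correct observable module — together with the routine but slightly delicate point that the vector integrals in \eqref{4.1}, \eqref{3.2}, \eqref{3.3} commute with bounded module maps and with closure; everything after that is bookkeeping.
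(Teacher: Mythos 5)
Your overall route is exactly the paper's: the paper simply asserts the coherence of the observable structures ($\Hs_F^{t_0}=\Hs_{U^F}^{t_0}$, $\Hs_\xi^{t_0}=\Hs_{U^\xi}^{t_0}$, etc.) and reads the corollary off from the fact that determinism, pure nondeterminism and the Wold summands are determined by these structures and by the gramian projection onto the common remote past; your steps two and three (equal remote pasts force equal projections, and a bounded $\Bc(H)$-linear projection commutes with the norm-limits of sums defining \eqref{4.1}, \eqref{3.2}, \eqref{3.3}) are a faithful and correct elaboration of that.

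There is, however, one genuine gap in your step one, in the measure case. Your reverse inclusion $\Hs_\xi^{t_0}\subset\Hs_{U^\xi}^{t_0}$ cannot be obtained ``the same way'' as for $F$: any continuous $\varphi$ with $\mathrm{supp}\,\varphi\subset\{t\le t_0\}$ necessarily vanishes on the boundary set where some coordinate equals the corresponding coordinate of $t_0$ (points violating the inequality lie arbitrarily close), so test functions supported in the closed half-orthant cannot ``see'' mass that $\xi$ places on that boundary. Concretely, for $d=1$ and $\xi=\delta_{t_0}\otimes h$ with $0\neq h\in\Hs$ one has $U^\xi_\varphi=\varphi(t_0)h=0$ for every admissible $\varphi$, so $\Hs_{U^\xi}^{t_0}=\{0\}$ while $\xi(\{t_0\})=h\in\Hs_\xi^{t_0}$; the exact $t_0$-wise identity you (and, implicitly, the paper) claim is false in general, and no choice of one-sided mollifier repairs it. (For $F$ continuous, and hence for $\xi^F$, which is absolutely continuous with respect to Lebesgue measure, your argument is fine.) The corollary itself is unaffected, but the correct way to close the gap is an interleaving statement rather than equality: one has $\Hs_{U^\xi}^{t}\subset\Hs_\xi^{t}\subset\Hs_{U^\xi}^{t+\epsilon\mathbf{1}}$ for every $\epsilon>0$, the second inclusion coming from inner/outer regularity of $\xi$ together with Urysohn functions supported in $\{s< t+\epsilon\}$ and a semivariation estimate on $V\setminus K$. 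Since the two families of observable modules are cofinal in each other, their unions and intersections coincide, so the modular (vector) domains and the remote pasts agree, which is all that the notions of (operator) determinism, nondeterminism, pure nondeterminism and the identification $P'=P$ of the gramian projections in your step three actually require; from there your computation $(U^\xi)^{det}_\varphi=P\,U^\xi_\varphi=U^{\xi^{det}}_\varphi$ goes through unchanged.
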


\section{Concluding remarks}

Let's mention that the invariance to the gramian unitary equivalences is an important property not only for a pair of m.s.o. random distribution fields with the same operator covariance distribution (Corollary \ref{cor3.3}), but for a larger class of m.s.o. random distribution fields, namely those having the property of operator stationarity.
Such a study of stationarity, stationarily cross correlatedness, the corresponding spectral representations, as well as the periodically correlatedness are treated in \cite{GaPoMSOSMII} and \cite{GaSidJFA}.
An investigation of some other classes of m.s.o. random distribution fields, which are invariant to similarities (e.g. uniformly bounded linearly stationary m.s.o. random distribution fields or other frameworks as in \cite{ValPGa}), or which are invariant to the actions of bounded $\Bc(H)$-linear operators (especially harmonizable m.s.o. random distribution fields) and where the concept of propagator, see \cite{MasDilat} or \cite{Szaf}, as well as some ``intertwining'' properties of the Fourier transform with the covariance distribution mapping \eqref{3.5}, or with the covariance distribution bimeasure mappings \eqref{311}, \eqref{313} play important roles, will be conducted in some forthcoming papers.


\begin{thebibliography}{99}

\bibitem{Bala}{{\sc K. Balagangadharan}, {\em The prediction theory of stationary random distributions}, {Memoirs of the College of Science}, {University of Kyoto}, {Series A, Mathematics} vol. XXXIII, {no. 2}, (1960), 243 -- 256.}

\bibitem{BCR}{{\sc C. Berg, J.~P.~R. Christensen, P. Ressel}, {\em Harmonic Analysis on Semigroups - Theory of Positive Definite and Related Functions}, {Springer Verlag}, {New York}, 1984.}

\bibitem{ChoWeDiss}{{\sc Chobanyan, S. A. , Weron, A.}, {\em Banach-space-valued stationary processes and their linear prediction}, {Dissertationes Math.}, {\bf 125}, (1975), 1 -- 45.}

\bibitem{FleischKooh}{{\sc Fleischer, I. , Kooharian, A.}, {\em Complements to Karhunen Representation}, Math. Scand., {\bf 9} (1961), 83 -- 89.}

\bibitem{PGapercorr}{{\sc P. Ga\c spar}, {\em On operator periodically correlated random fields}, {Operator Theory: Adv. and Appl.}, vol.153, {Birkh\"{a}user Verlag, Basel}, {Switzerland}, 2004, 143 -- 156.}

\bibitem{GaPoMSOSMII}{{\sc P. Ga\c spar, L. Popa}, {\em Stochastic Mappings and Random Distribution Fields II. Stationarity}, arXiv:1404.1512 [math.FA].}

\bibitem{GaSidJFA}{{\sc P. Ga\c spar, L. Sida}, {\em Periodically correlated multivariate second order random distribution fields and stationary cross correlatedness}, J. Funct. Analysis, {\bf 267}, 7, (2014), 2253 -- 2263}

\bibitem{Gelf1}{{\sc I. M. Gelfand}, {\em Generalized random processes I (russian)}, DAN 100 (1955), 953 -- 856.}

\bibitem{GelfVil}{{\sc I. M. Gelfand,  N. Ja. Vilenkin}, {\em Generalized functions}, tome 4: {\em Some applications of harmonic analysis}, {Gosud. Izv. fis.-mat. lit.},{Moskva}, 1961.}

\bibitem{Groth} {{\sc A. Grothendieck}, {\em Produits tensoriels topologiques et espaces nucl\'{e}aires}, {Mem. of the Amer. Math. Soc.}, {nr.16}, {Providence Rhode Island}, 1966.}

\bibitem{Hack4}{{\sc W. Hackenbroch}, {\em Zur Vor\-her\-sa\-ge\-the\-o\-rie sta\-tio\-n\"{a}\-rer O\-pe\-ra\-tor\-fol\-gen}, {Zur Wahr\-schein\-lich\-keits\-the\-o\-rie verw. Gebiete}, {\bf 18} (1971), 305 -- 321.}

\bibitem{Hack5}{{ \sc W. Hackenbroch}, {\em Point localization and spectral theory for symmetric random operators}, Archiv der Mathematik {\bf 92}, 5, (2009), 485 -- 492.}

\bibitem{Ito} { {\sc K. It\^o}, {\em Stationary Random Distributions},{ Mem. Coll. Sci. Univ. Kyoto A}, XXVIII {\bf 3} (1953), 209 -- 223.}


\bibitem{Kaki}{ {\sc Y. Kakihara}, {\em Multidimensional Second Order Stochastic Processes}, {World Scientific Publ.Comp.}, {River Edge}, N.I., 1997.}


\bibitem{Mak}{{\sc A. Makagon}, {\em Induced stationary processes and structure of locally square integrable periodically correlated processes}, Studia Math. {\bf 136} (5) (1999), 71 -- 86.}

\bibitem{MasDilat} {{\sc P. Masani}, {\em Dilations as propagators of Hilbertian varieties},{ Siam  J. Math. Anal., Vol.9},  {\bf 3}  {June}  (1978), 414 -- 456.}

\bibitem{Niemi2} {{\sc H. Niemi}, {\em Stochastic Processes as Fourier Transforms of Stochastic Measures},{ Ann. Acad. Sci. Fenn. Ser. A I Math.} {\bf591} (1975), 1 -- 47.}

\bibitem{Rao1}{{ \sc M. M. Rao}, {\em Stochastic Processes: General Theory},{ Kluwer Acad. Pub. 1995.}}

\bibitem{Roz2}{ {\sc Yu. A. Rozanov}, {\em On extrapolation of random distributions (russian)}, {Teorija Veroyatnosti IV} (1959), 465 -- 471.}

\bibitem{Schw1} {{\sc L. Schwartz}, {\em Th\'eorie des distributions I, II}, Fasc. IX, X, {Actualit\'{e}s Scientifiques Industrielles} {\bf 1245, 1112}, Hermann, Paris, 1957, 1959.}

\bibitem{Schw2} {{\sc L. Schwartz}, {\em Th\'eorie des distributions \'a valeurs vectoriells I, II},{ Ann. Inst. Fourier}, VII (1957), VIII (1958), 1 -- 141, 1 -- 209.}

\bibitem{Sko}{{\sc A. V. Skorohod}, {\em Random Linear Operators}, D. Reidel Publ. Comp., Dardrecht, 1984.}

\bibitem{SucVal} {{\sc I. Suciu and I. Valu\c{s}escu}, {\em A linear filtering problem in complete correlated actions},{ Journal of Multivariate Analysis} {\bf 9, 4} (1979), 559 -- 613.}

\bibitem{Szaf}{{\sc F.~H. Szafraniec}, {\em Murphy's positive definite kernels and Hilbert $C^*$~-~modules reorganized}, Noncommutative Harmonic Analysis with Applications to Probability II, Banach Center Publications, vol. {\bf 89}, Institute of Mathematics, Polish Academy of Sciences, Warszawa 2010, pp. 275 -- 295.}


\bibitem{Thang2} {{\sc D. H. Thang}, {\em Random Mappings on Infinite Dimensional Spaces}, Stochatics \& Stochastic Reports 88, (1997) 51 -- 73.}

\bibitem{Thang3}{{\sc D. H. Thang and Ng. Thinh}, {\em Random bounded operators and their extension}, Kyushu J. Math. {\bf 58}, 25, (2004), pp. 257 -- 276.}

\bibitem{Thang4}{{\sc D. H. Thang, N. Thinh and T. X. Quy}, {\em Generalized Random Spectral Measures}, J. Theor. Probab., {\bf 27}, (2014), pp. 576 -- 600.}

\bibitem{Val80} {{\sc I. Valu\c{s}escu}, {\em Stationary processes in complete correlated actions}, {Mon. Mat.} {\bf 80}, {West University of  Timi\c{s}oara}, {Timi\c{s}oara}, 2007.}

\bibitem{Val83} {{\sc I. Valu\c{s}escu}, {\em Stochastic processes in complete correlated actions}, {Mon. Mat.}  {\bf 83}, {West University of  Timi\c{s}oara}, {Timi\c{s}oara}, 2008.}

\bibitem{ValPGa}  {{\sc I. Valu\c{s}escu, P. Ga\c{s}par}, {\em On Uniformly Bounded Linearly $\Gamma$~-~stationary Processes}, AIP Conference Proceedings {\bf 1281} (2010), 432 -- 435 }

\bibitem{Weron} {{\sc A. Weron}, {\em Prediction theory in Banach spaces}, {Proc.of the Winter School on Probability}, {Karpacz}, {Springer Verlag}, {London}, (1975), pp. 207 -- 228.}

\bibitem{Wold}{{\sc H. Wold}, {\em A Study in the Analysis of Stationary Time Series},{ Almquist Wiksell}, {Stokholm}, 1938. }

\bibitem{Yadrenko}{{\sc M. I. Yadrenko}, {\em Spectral Theory of Random Fields },{ Optimization Software, Inc, New York} 1983.}

\bibitem{Yaglom52}{{\sc A. M. Yaglom}, {\em An introduction to the theory of stationary random functions}, Usp. Mat. Nauk, vol. {\bf 7} (1952), 3 -- 168 (russian).}

\bibitem{Yaglom57}{{\sc A. M. Yaglom}, {\em Some classes of random fields in $n$-dimensional space, related to stationary random processes}, Theory Prob. Applic. vol. {\bf II}, number {\bf 3} (1957), 274 -- 320.}

\bibitem{Yaglom87}{{\sc A. M. Yaglom}, {\em Correlation Theory of Stationary and Related Random Functions}, {Vols. I, II.} {Springer-Verlag, New York}, 1987.}

\end{thebibliography}
\end{document}